\documentclass[12pt]{article}

\usepackage{amssymb}
\usepackage{amsmath}
\usepackage{amsthm}

\allowdisplaybreaks

{\theoremstyle{plain}
 \newtheorem{theorem}{Theorem}
}
{\theoremstyle{definition}
\newtheorem{example}{Example}
}

\begin{document}

\begin{center}
{\Large The construction of $q$-analogues via $_3\phi_2$-series and $q$-difference equations}

 \ 

{\textsc{John M. Campbell}}

\end{center}

\begin{abstract}
 We apply the {\tt EKHAD}-normalization method given in our recent work to obtain, via the $q$-version of Zeilberger's algorithm,
 $q$-WZ pairs $(F, G)$ such that $\sum_{k = 0}^{\infty} F(0, k)$ may be expressed as a basic hypergeometric series of the form 
 ${}_{3}\phi_2$ with multiple free parameters, and in such a way so that $\sum_{k=0}^{\infty} F(0, k) = \sum_{n=0}^{\infty} G(n, 0)$. In 
 contrast to how previous applications of {\tt EKHAD}-normalization relied on $q$-analogues for specific WZ pairs introduced by 
 Guillera, our multiparameter approach provides a broad framework in the construction of $q$-analogues for accelerated series for 
 universal constants such as $\pi$. We apply this multiparameter version of {\tt EKHAD}-normalization to obtain and prove new 
 $q$-analogues for accelerated hypergeometric series attributed to many authors, including (alphabetically) Adamchik and Wagon, 
 Ap\'{e}ry, Chu, Chu and Zhang, Fabry, Guillera, Ramanujan, and Zeilberger. 
\end{abstract}

\vspace{0.1in}

\noindent {\footnotesize \emph{MSC:} 33F10, 39A13, 33D15, 33D05}

\vspace{0.1in}

\noindent {\footnotesize \emph{Keywords:} $q$-analogue, hypergeometric series, series acceleration, $q$-difference equation, 
 difference equation, closed form, basic hypergeometric series,  Ramanujan-type series, $q$-shifted factorial}

\section{Introduction}
 The application of $q$-series is of importance within many areas of mathematics, with reference to the text by Gasper and Rahman on 
 basic hypergeometric series \cite{GasperRahman2004} and many subsequent and related research works. As in Gasper and Rahman's 
 monograph \cite[p.\ 36]{GasperRahman2004}, we highlight notable research works in the history of the application of $q$-difference 
 equations, as in the works (chronologically listed) of Jackson \cite{Jackson1910q}, Carmichael \cite{Carmichael1912}, Adams 
 \cite{Adams1931}, Starcher~\cite{Starcher1931}, Trjitzinsky \cite{Trjitzinsky1933}, and Andrews \cite{Andrews1968}. Since
 Wilf--Zeilberger theory marks a paradigm shift \cite{PetkovsekWilfZeilberger1996} given by the use telescoping methods in the field of 
 symbolic computation and beyond \cite{Zeilberger1991}, this motivates new developments and applications of $q$-difference 
 equations via the $q$-Zeilberger algorithm \cite[\S6]{PetkovsekWilfZeilberger1996}. 

 The famous series for $\frac{1}{\pi}$ introduced by Ramanujan in 1914 \cite[pp.\ 352--354]{Berndt1994} \cite{Ramanujan1914} may be 
 seen as prototypical instances of fast converging, hypergeometric series for $\pi$. Given that Ramanujan's series for $\frac{1}{\pi}$ were 
 introduced in 1914, and considering the importance of Ramanujan's series within number theory and other areas, it seems remarkable 
 that the first known $q$-analogues for any of Ramanujan's series were introduced as late as 2018, as in the work of Guo and 
 Liu~\cite{GuoLiu2018}, and as noted by Guo and Zudilin~\cite{GuoZudilin2018}. 
 The methodologies applied in our current paper in the derivation of 
 $q$-analogues related to Ramanujan's summations are inspired by $q$-analogues subsequent to and related to the Guo--Liu 
 $q$-analogues~\cite{ChenChu2021,ChenChu2023,Guillera2018,Guo2020,GuoZudilin2018,HouKrattenthalerSun2019,HouSun2021,
Sun2019,WangZhong2023,Wei2023,Wei2020,WeiRuan2024}. 

 The importance of $q$-analogues for classical, multiparameter, and hypergeometric summation identities is reflected in the above 
 referenced Gasper--Rahman monograph \cite{GasperRahman2004}, with regard to the $q$-Clausen formulas 
 \cite[\S8.8]{GasperRahman2004}, the $q$-Dixon formula \cite[p.\ 44]{GasperRahman2004}, the $q$-Dougall formula~\cite[p.\ 
 44]{GasperRahman2004}, the $q$-Gauss formula \cite[p.\ 14]{GasperRahman2004}, the $q$-Karlsson--Minton formulas 
 \cite[\S1.9]{GasperRahman2004}, the $q$-Kummer formula \cite[p.\ 18]{GasperRahman2004}, the $q$-Saalsch\"{u}tz formula 
 \cite[\S1.7]{GasperRahman2004}, the $q$-Vandermonde formula \cite[p.\ 14]{GasperRahman2004}, the $q$-Watson formula
 \cite[p.\ 61]{GasperRahman2004}, the $q$-Whipple formula~\cite[p.\ 61]{GasperRahman2004}, and many further $q$-analogues. 
 This gives weight to our method for generating $q$-analogues for known hypergeometric evaluations and identities using 
 multiparameter ${}_{3}\phi_{2}$-series. As emphasized via the motivating results in Section \ref{sectionMotivating}, our method can be 
 applied in a very versatile way to obtain new $q$-analogues of previously published results on hypergeometric series. 

 The techniques in this paper build upon and extend our previous use of what we referred to as \emph{{\tt EKHAD}-normalization} 
 \cite{Campbell2026}. Whereas this previous work relied on applying {\tt EKHAD}-normalization using \emph{specific} WZ pairs previously 
 introduced by Guillera \cite{Guillera2006,Guillera2002}, we have formulated a much more general way of applying 
 {\tt EKHAD}-normalization using multiparameter input functions, and this may be seen by analogy with Au's method of WZ seeds 
 \cite{Au2025} and $q$-WZ seeds \cite{Auunpublished} along with our previous work 
 \cite{Campbell2025Hypergeometric,Campbellunpublished,CampbellLevrie2024} related
 to an acceleration method due to Wilf \cite{Wilf1999}. 

\section{Preliminaries and background}
 The $\Gamma$-function may be seen as among the most important special functions, 
 with reference to Rainville's textbook on special functions \cite[\S2]{Rainville1960}, 
 and may be defined 
 via an Euler integral, with 
\begin{equation}\label{displayintegral}
 \Gamma(x) = \int_{0}^{\infty} t^{x-1} e^{-t} \, dt 
\end{equation} 
 for $\Re(x) > 0$. As a notational shorthand, we may write 
\begin{equation}\label{simplifyGamma}
 \Gamma\!\left[ \begin{matrix} \alpha, \beta, \ldots, \gamma \vspace{1mm} \\ 
 A, B, \ldots, C \end{matrix} \right] = \frac{ \Gamma(\alpha) \Gamma(\beta) 
 \cdots \Gamma(\gamma) }{ \Gamma(A) \Gamma(B) \cdots \Gamma(C)}. 
\end{equation}
 The \emph{Pochhammer symbol} may then be defined so that 
\begin{equation}\label{origPoch}
 \left( a \right)_{n} = 
 \Gamma\!\left[ \begin{matrix} a + n \vspace{1mm} \\ 
 a \end{matrix} \right], 
\end{equation}
 and, by analogy with \eqref{simplifyGamma}, we write 
\begin{equation*}
 \left[ \begin{matrix} \alpha, \beta, \ldots, \gamma \vspace{1mm} \\ 
 A, B, \ldots, C \end{matrix} \right]_{k} = \frac{ (\alpha)_{k} (\beta)_{k} 
 \cdots (\gamma)_{k} }{ (A)_{k} (B)_{k} \cdots (C)_{k}}. 
\end{equation*}
 We may define \emph{generalized hypergeometric series} so that 
\begin{equation}\label{pFq}
 {}_{r}F_{s}\!\!\left[ \begin{matrix} 
 a_{1}, a_{2}, \ldots, a_{r} \vspace{1mm}\\ 
 b_{1}, b_{2}, \ldots, b_{s} \end{matrix} \ \Bigg| \ x 
 \right] = \sum_{n=0}^{\infty} 
 \left[ \begin{matrix} a_{1}, a_{2}, \ldots, a_{r} \vspace{1mm} \\ 
 b_{1}, b_{2}, \ldots, b_{s} \end{matrix} \right]_{n} \frac{x^{n}}{n!}, 
\end{equation}
 referring to Bailey's classic text on such series for details and background material \cite{Bailey1964}. We refer to the value $x$ in 
 \eqref{pFq} as the \emph{convergence rate} of either side of \eqref{pFq}. 

 The \emph{$q$-shifted factorial}, which is also referred to as the \emph{$q$-Pochhammer 
 symbol}, is such that 
\begin{equation}\label{qdiscreteshift}
 (a;q)_{n} = \begin{cases} 
 1 & n = 0, \\
 (1-a)(1-aq) \cdots (1-aq^{n-1}) & n \in \mathbb{N}. 
 \end{cases}
\end{equation}
 The definition in \eqref{qdiscreteshift} is also extended so that $$ (a;q)_{\infty} = \prod_{k=0}^{\infty} (1-aq^k) $$ for $q$ such that $|q| < 
 1$, and this allows us to further extend the definition in \eqref{qdiscreteshift} to 
 non-integer values of $n$ according to the relation 
 $$ (a;q)_{n} = \frac{(a;q)_{\infty}}{(aq^n;q)_{\infty}}, $$
 and we make use of the notational shorthand such that 
\begin{equation*}
 \left[ \begin{matrix} \alpha, \beta, \ldots, \gamma \vspace{1mm} \\ 
 A, B, \ldots, C \end{matrix} \, \Bigg| \, q \right]_{n} 
 = \frac{ \left( \alpha; q \right)_{n} 
 \left( \beta; q \right)_{n} \cdots \left( \gamma; q \right)_{n} }{ 
 \left( A; q \right)_{n} 
 \left( B; q \right)_{n} \cdots \left( C; q \right)_{n}}. 
\end{equation*}
 The $q$-Pochhammer symbol gives us a $q$-analogue of the function defined in \eqref{origPoch}
 in the sense that 
 $$ \lim_{q \to 1} \frac{ \left( q^{a};q \right)_{n} }{(1-q)^{n}} = (a)_{n}. $$
 Similarly, the $q$-Gamma function may be defined so that
 $$ \Gamma_{q}(x) = (1-q)^{1-x} \left[ \begin{matrix} q \vspace{1mm} \\ 
 q^{x} \end{matrix} \, \Bigg| \, q \right]_{\infty}, $$
 for $|q| < 1$, giving us a $q$-analogue of \eqref{displayintegral}, with 
 $ \lim_{q \to 1} \Gamma_{q}(x) = \Gamma(x)$, 
 and we adopt a notational shorthand for combinations of $q$-Gamma expressions
 by analogy with \eqref{simplifyGamma}. 
 Also, the \emph{$q$-bracket} 
 is defined so that $[n]_{q} = \frac{1-q^n}{1-q}$, 
 noting that $\lim_{q \to 1} [n]_{q} = n$. 
 By then defining the $q$-factorial
 so that $[n]_{q}! = [1]_{q} [2]_{q} \cdots [n]_{q}$, we may define \emph{Gaussian binomial coefficients}
 so that 
 $$ \left[ \begin{matrix} 
 n \\ 
 k 
 \end{matrix} \right]_{q} = \frac{ [n]_{q}! }{ [k]_{q}! [n-k]_{q}! } $$ for $ k \leq n$. 

 \emph{Unilateral basic hypergeometric series} provide a $q$-analogue of \eqref{pFq} and may be defined so that 
\begin{multline*}
 {}_{j}\phi_{k}\!\!\left[ \begin{matrix} 
 a_{1}, a_{2}, \ldots, a_{j} \vspace{1mm}\\ 
 b_{1}, b_{2}, \ldots, b_{k} \end{matrix} \ \Bigg| \ q; z 
 \right] \\ 
 = \sum_{n=0}^{\infty} \frac{z^n}{ (q;q)_{n} } 
 \left[ \begin{matrix} 
 a_{1}, a_{2}, \ldots, a_{j} \vspace{1mm} \\ 
 b_{1}, b_{2}, \ldots, b_{k} \end{matrix} \, \Bigg| \, q \right]_{n} 
 \left( (-1)^{n} q^{\binom{n}{2}} \right)^{1 + k - j}. 
\end{multline*}
 Basic hypergeometric series of the form ${}_{3}\phi_{2}$ are to play a key role in our work. 
 Examples of especially notable ${}_{3}\phi_{2}$-identities include
 the relation 
$$ {}_{3}\phi_{2}\!\!\left[ \begin{matrix} 
 q^{-n}, a, b \vspace{1mm}\\ 
 c, 	 \frac{a b q^{1-n}}{c} \end{matrix} \ \Bigg| \ q; 1 
 \right] 
 = \left[ \begin{matrix} 
 \frac{c}{a}, \frac{c}{b} \vspace{1mm} \\ 
 c, \frac{c}{ab} \end{matrix} \, \Bigg| \, q \right]_{n} 
 $$ for integers $n \geq 0$ attributed to Jackson \cite{Jackson1910Transformations}. 
 Notable instances of ${}_{3}\phi_{2}$-transforms include 
 the $q$-analogue
\begin{multline*} 
 {}_{3}\phi_{2}\!\!\left[ \begin{matrix} 
 a, b, c \vspace{1mm}\\ 
 d, e \end{matrix} \ \Bigg| \ q; q^{d+e-a-b-c} 
 \right] 
 = \\ \Gamma_{q}\!\left[ \begin{matrix} e, d+e-a-b-c \vspace{1mm} \\ 
 e-a, d + e - b - c \end{matrix} \right] 
 {}_{3}\phi_{2}\!\!\left[ \begin{matrix} 
 a, d-b, d-c \vspace{1mm}\\ 
 d, d+e-b-c \end{matrix} \ \Bigg| \ q; q^{e-a} 
 \right] 
\end{multline*} 
 of Thomae's transformation \cite[p. 200]{GasperRahman2004}, referring to Gasper and Rahman's text 
 for background and for many related transforms. 

\subsection{{\tt EKHAD}-normalization}\label{sectionEKHAD}
 A bivariate function $F(n, k)$ is \emph{hypergeometric} if $\frac{F(n+1,k)}{F(n,k)}$ and $\frac{F(n,k+1)}{F(n,k)}$ are both rational functions, 
 and the coefficients are taken to be in $\mathbb{Q}$ in the context of WZ theory. For a rational function $R(n, k)$, write $G(n, k) = R(n, 
 k) F(n, k)$, and suppose that $F$ and $G$ satisfy a difference equation of the form 
\begin{equation}\label{fundamentalWZ}
 F(n+1, k) - F(n, k) = G(n, k+1) - G(n, k). 
\end{equation}
 This may be seen as the fundamental difference equation 
 in WZ theory, with reference to the classic text by 
 Petkov\v sek et al.\ on WZ theory \cite{PetkovsekWilfZeilberger1996}. 
 While a given function $F(n, k)$ may not be such that there exists a companion $G(n, k)$
 of the desired form satisfying \eqref{fundamentalWZ}, 
 \emph{Zeilberger's algorithm} \cite[\S6]{PetkovsekWilfZeilberger1996} 
 may be thought of as extending the WZ difference equation in \eqref{fundamentalWZ}
 in such a way so that this algorithm
 is applicable to \emph{any} hypergeometric input function $F(n, k)$, 
 referring to the same text by Petkov\v sek et al.\ for details. 
 Our techniques rely on the $q$-version of Zeilberger's algorithm, 
 again referring to the appropriate text by Petkov\v sek et al.\ for details. 

 A \emph{first-order} difference equation
 produced by Zeilberger's algorithm (resp.\ the $q$-Zeilberger algorithm)
 with $F(n, k)$ as a given input function, assuming that $F(n, k)$
 is such that a difference equation of the following form exists, 
 is such that 
\begin{equation}
 p_{1}(n) F(n+1, k) - p_{2}(n) F(n, k) = G(n, k+1) - G(n, k) 
\end{equation}
 for (single-variable) polynomials 
 $p_{1}(n)$ and $p_{2}(n)$ (resp.\ $q$-analogues). 
 Following our past work \cite{Campbell2026}, by ``normalizing'' 
 the input function $F(n, k)$ by setting 
\begin{equation}\label{displaynormal}
 \overline{F}(n, k) = (-1)^{n} \left( \prod_{i=j}^{n-1} \frac{p_{1}(i)}{p_{2}(i)} \right) F(n, k) 
\end{equation}
 for minimal $j$ such that $p_2$ is nonvanishing, 
 typically with $j \in \{ 0, 1 \}$, 
 and by applying Zeilberger's algorithm to the normalized version of $F$ in \eqref{displaynormal}, 
 we obtain a companion function
 $\overline{G}(n, k)$ such that $\overline{F}$ and $\overline{G}$ satisfy the desired WZ difference equation, 
 and this may be verified inductively. 
 As in our past work \cite{Campbell2026}, we refer to the normalization process indicated above
 as \emph{{\tt EKHAD}-normalization}.

 Our past work \cite{CampbellLevrie2024} concerned multiparameter versions of an acceleration method due to Wilf 
 \cite{Wilf1999}.     
 In   this     past work \cite{CampbellLevrie2024}, by applying Zeilberger's algorithm
 to certain input functions such as 
 \begin{equation}\label{freelowerquarter}
 F(n, k) := \left[ \begin{matrix} 
 a, b \vspace{1mm} \\ 
 n, n
 \end{matrix} \right]_{k}, 
\end{equation}
 this produced inhomogeneous, first-order difference equations that were \emph{not}
 normalized, in accordance with Wilf's method \cite{Wilf1999}. 
 Due to certain conver-gence-related issues, it appears that this approach 
 would not work, in general, be applicable using $q$-analogues of input function as in 
 \eqref{freelowerquarter}, 
 and this leads us to apply {\tt EKHAD}-normalization to $q$-analogues of 
 hypergeometric expressions as in \eqref{freelowerquarter}, 
 and, instead of using Wilf's acceleration method (which relies on inhomogeneous difference equations, 
 in contrast to \eqref{fundamentalWZ}), we instead consider a telescoping argument 
 based on WZ difference equations. 

 For a $q$-analogue $\mathcal{F}(n, k; q)$ of a hypergeometric input function as in 
 \eqref{freelowerquarter}, 
 suppose that $\mathcal{F}(n, k; q)$ satisfies a first-order difference equation according
 to the $q$-Zeilberger algorithm.
 By then applying {\tt EKHAD}-normalization to $\mathcal{F}(n, k; q)$, 
 yielding a $q$-WZ pair $(\overline{\mathcal{F}}, \overline{\mathcal{G}})$, a telescoping phenomenon gives us that 
\begin{equation*}
 \overline{\mathcal{F}}(m+1, k; q) - \overline{\mathcal{F}}(0, k; q) = \sum_{n=0}^{m} \overline{\mathcal{G}}(n, k+1; q) - \sum_{n=0}^{m} \overline{\mathcal{G}}(n, k; q). 
\end{equation*}
 Depending on the parameters, 
 suppose that $\lim_{m \to \infty} \overline{\mathcal{F}}(n+1,k;q)$ vanishes, with 
\begin{equation}\label{meta1}
 - \overline{\mathcal{F}}(0, k; q) = \sum_{n=0}^{\infty} \overline{\mathcal{G}}(n, k+1; q) - \sum_{n=0}^{\infty} \overline{\mathcal{G}}(n, k; q) 
\end{equation}
 and 
\begin{equation}\label{meta2}
 - \overline{\mathcal{F}}(0, k+1; q) = \sum_{n=0}^{\infty} \overline{\mathcal{G}}(n, k+2; q) - \sum_{n=0}^{\infty} \overline{\mathcal{G}}(n, k+1; q) 
\end{equation}
 and 
\begin{equation}\label{meta3}
 - \overline{\mathcal{F}}(0, k+2; q) = 
 \sum_{n=0}^{\infty} \overline{\mathcal{G}}(n, k+3; q) - \sum_{n=0}^{\infty} \overline{\mathcal{G}}(n, k+2; q), 
\end{equation}
 and so forth. Ideally, and again 
 depending on the input parameters, 
 we would apply a telescoping argument 
 suggested in 
 \eqref{meta1}--\eqref{meta3} together with 
 an analytic argument to prove the permissibility of an interchange of limiting operations of the form 
 $$\lim_{m \to \infty} \sum_{n=0}^{\infty} \overline{\mathcal{G}}(n,k+m;q) = 
 \sum_{n=0}^{\infty} \lim_{m \to \infty} \overline{\mathcal{G}}(n,k+m;q),$$ 
 to prove that 
\begin{equation}\label{FkGn}
 \sum_{k=0}^{\infty} \overline{\mathcal{F}}(0, k;q) = \sum_{n=0}^{\infty} \overline{\mathcal{G}}(n,0;q). 
\end{equation}
 We apply this approach 
 to produce the motivating results highlighted in Section \ref{sectionMotivating}
 along with many further results. 
 Through extensive computer searches for combinations of input parameters such that 
 the ``non-$q$'' version of \eqref{FkGn}
 admits a closed form, 
 this provides a versatile way of
 generating new $q$-analogues, as demonstrated in Section 
 \ref{sectionMotivating} below. 

\subsection{Ramanujan-type formulas and $\Gamma$-values} 
 Ramanujan's $17$ formulas for $\frac{1}{\pi}$ \cite[pp.\ 352--354]{Berndt1994} \cite{Ramanujan1914} include 
\begin{equation}\label{Ramanujanquarter}
 \frac{4}{\pi} = 
 \sum_{n = 0}^{\infty} 
 \left( \frac{1}{4} \right)^{n} \left[ \begin{matrix} 
 \frac{1}{2}, \frac{1}{2}, \frac{1}{2} 
 \vspace{1mm} \\ 
 1, 1, 1
 \end{matrix} \right]_{n} (6 n + 1), 
 \end{equation}
 and one of the main results in our paper is given by a new 
 $q$-analogue of \eqref{Ramanujanquarter} 
 that we highlight in Section \ref{subRamanujan}
 and that we prove as a consequence of Theorem \ref{theoremquarter} below. 
 Further formulas, out of Ramanujan's $17$ formulas for $\frac{1}{\pi}$, include
 $$ \frac{27}{4\pi} = \sum_{n = 0}^{\infty} 
 \left( \frac{2}{27} \right)^{n} \left[ \begin{matrix} 
 \frac{1}{3}, \frac{1}{2}, \frac{2}{3} 
 \vspace{1mm} \\ 
 1, 1, 1
 \end{matrix} \right]_{n} (15 n + 2) 
 $$ and 
 $$ \frac{72}{\pi} = \sum_{n = 0}^{\infty} 
 \left( -\frac{1}{324} \right)^{n} \left[ \begin{matrix} 
 \frac{1}{4}, \frac{1}{2}, \frac{3}{4} 
 \vspace{1mm} \\ 
 1, 1, 1
 \end{matrix} \right]_{n} (260 n + 23). $$

 The importance of Ramanujan's $\pi$-formulas in the history of mathematics, especially in regard to subjects such as computational 
 number theory, is made clear through the expository work 
 due to Baruah et al.\ \cite{BaruahBerndtChan2009}
 along with the classic monograph on $\pi$ and the AGM \cite{BorweinBorwein1987}. 
 This motivates the concept of a 
 \emph{Ramanujan-type formula}, which, for our purposes, refers to 
 an evaluation, in closed form or in terms of constants such as $\Gamma$-values with rational arguments, 
 for a series of the form 
\begin{equation}\label{20260213945PM12AS}
 \sum_{n=0}^{\infty} z^n \frac{ (a_{1})_{n} (a_{2})_{n} (a_{3})_{n} }{ 
 (b_{1})_{n} (b_{2})_{n} (b_{3})_{n} } (c_{1} n + c_2), 
\end{equation}
 with $a_i$ and $b_i$ and $c_i$ being rational, 
 and with the condition that 
 the difference between the arguments of any lower 
 and upper Pochhammer symbols within the summand in \eqref{20260213945PM12AS} is not an integer. 
 Many of our $q$-analogues introduced in this paper
 are $q$-analogues of known Ramanujan-type formulas. 
 Moreover, our techniques have the added benefit of producing
 many further Ramanujan-type formulas that have not previously been known. 

 While a number of our past works have concerned closed-form evaluations for series satisfying
 the given conditions for \eqref{20260213945PM12AS}, 
 there is relatively little known when it comes to the expression of 
 series satisfying the conditions for 
 \eqref{20260213945PM12AS} with $\Gamma$-values with rational arguments. 
 We apply our techniques given in this paper 
 to produce many new evaluations of this form, 
 and this is motivated by past and remarkable evaluations of this form 
 due to Chen and Chu \cite{ChenChu2024}, including 
\begin{align*}
 \frac{\pi \Gamma^{2}\left( \frac{1}{3} \right)}{ 6 \Gamma^{2}\left( \frac{5}{6} 
 \right) } & = \sum_{n = 0}^{\infty} 
 \left( -\frac{1}{27} \right)^{n} \left[ \begin{matrix} 
 \frac{1}{2}, \frac{2}{3}, 1 
 \vspace{1mm} \\ 
 \frac{4}{3}, \frac{4}{3}, \frac{4}{3}
 \end{matrix} \right]_{n} (7n+3), \\ 
 \frac{2\pi^2}{\Gamma^{3}\left( \frac{2}{3} \right)} & = \sum_{n = 0}^{\infty} 
 \left( -\frac{1}{27} \right)^{n} \left[ \begin{matrix} 
 \frac{1}{6}, \frac{2}{3}, \frac{2}{3} 
 \vspace{1mm} \\ 
 1, \frac{7}{6}, \frac{4}{3}
 \end{matrix} \right]_{n} (21n+8), \ \text{and} \\ 
 \frac{2\pi^2}{\Gamma^{3}\left( \frac{1}{3} \right)} & = \sum_{n = 0}^{\infty} 
 \left( -\frac{1}{27} \right)^{n} \left[ \begin{matrix} 
 -\frac{1}{6}, \frac{1}{3}, \frac{1}{3} 
 \vspace{1mm} \\ 
 \frac{2}{3}, \frac{5}{6}, 1
 \end{matrix} \right]_{n} (21n+1). 
\end{align*}
 Our $q$-analogues of Ramanujan-type formulas for $\Gamma$-values with rational 
 arguments are also inspired by $q$-analogues of this form due to 
 Chen and Chu~\cite{ChenChu2023}, 
 with reference to their applications of 
 Carlitz inversions of
 the $q$-Pfaff-Saalsch\"{u}tz theorem. 
 This approach led Chen and Chu to introduce evaluations such as 
 $$ \frac{\Gamma^{2}\left( \frac{1}{4} \right)}{8 \Gamma^{2}\left( \frac{3}{4} \right) } 
 = \sum_{n = 0}^{\infty} 
 \left( \frac{1}{4} \right)^{n} \left[ \begin{matrix} 
 \frac{1}{2}, \frac{1}{2}, \frac{1}{2} 
 \vspace{1mm} \\ 
 1, \frac{5}{4}, \frac{5}{4}
 \end{matrix} \right]_{n} (3n+1). $$

\section{Motivating results}\label{sectionMotivating}
 We highlight, in the current section, new results obtained from our multiparameter {\tt EKHAD}-normalization method that provide 
 $q$-analogues of notable summation identities from a number of different authors. 

 Ramanujan's formulas for $\frac{1}{\pi}$ are groundbreaking in terms of the history associated with the computation of $\pi$, with 
 reference to relevant texts related to this history 
 \cite{ArndtHaenel2001,BerggrenBorweinBorwein2004,BorweinBorwein1987}. 
 Since our work concerns $q$-analogues of fast converging series for universal constants such as $\pi$, 
 the groundbreaking nature of the 
 Guo--Liu $q$-analogues introduced in 2018 
 leads us toward our new $q$-analogue of the simplest or slowest converging
 out of Ramanujan's 17 series for $\frac{1}{\pi}$. 
 
\subsection{Ramanujan's series of convergence rate $\frac{1}{4}$}\label{subRamanujan}
 In Section \ref{subquarter}, we prove the relation 
\begin{multline*}
 q - 1 + \frac{q-1}{(q+1)^2} \sum_{n=0}^{\infty} 
 q^{2n} \left[ \begin{matrix} 
 q, q \vspace{1mm} \\ 
 q^4, q^4 \end{matrix} \, \Bigg| \, q^{2} \right]_{n} = \\ \sum_{n=0}^{\infty} q^{2n} 
 \left[ \begin{matrix} 
 q, q, q, q^3 \vspace{1mm} \\ 
 q^2, q^2 \end{matrix} \, \Bigg| \, q^{2} \right]_{n} 
 \frac{q^{4n+1} + q^{2n} - 2}{ (q^4;q^4)_{n} (q^6;q^4)_{n}} 
\end{multline*}
 for $|q| > 1$. This provides a new $q$-analogue of 
 the Ramanujan relation in \eqref{Ramanujanquarter}, with regard to the survey below. 

 The above $q$-analogue of \eqref{Ramanujanquarter} is not equivalent to 
 the previously known $q$-analogues of \eqref{Ramanujanquarter}, 
 namely, the Guo--Liu $q$-analogue of \eqref{Ramanujanquarter} \cite{GuoLiu2018} such that 
$$ \sum_{n=0}^{\infty} 
 q^{n^2} [6n+1] \left[ \begin{matrix} 
 q^2 \vspace{1mm} \\ 
 q^4, q^4, q^4 \end{matrix} \, \Bigg| \, q^{4} \right]_{n} (q;q^2)_{n}^{2} 
 = (1+q) 
 \left[ \begin{matrix} 
 q^{2}, q^{6} \vspace{1mm} \\ 
 q^4, q^4 \end{matrix} \, \Bigg| \, q^{4} \right]_{\infty}, $$
 the Guo $q$-analogue of \eqref{Ramanujanquarter} \cite{Guo2020} such that 
\begin{multline*}
 \sum_{n=0}^{\infty} 
 q^{2n^2} [6n+1] \left[ \begin{matrix} 
 q, q, q, q \vspace{1mm} \\ 
 q^2, q^2 \end{matrix} \, \Bigg| \, q^{2} \right]_{n} \frac{1}{(q^2;q^2)_{2n}} \times \\ 
 \left( [4n+1] - \frac{q^{4n+1}[2n+1]^2}{[4n+2]} \right) = 
 \left[ \begin{matrix} 
 q, q^{3} \vspace{1mm} \\ 
 q^2, q^2 \end{matrix} \, \Bigg| \, q^{2} \right]_{\infty}, 
\end{multline*}
 our previous $q$-analogue of \eqref{Ramanujanquarter} \cite{Campbell2026} such that 
\begin{multline*}
 \sum_{n=0}^{\infty} (-1)^n 
 q^{n^2} \frac{ (1-q^{ -1 + 2n}) (2 q^{1+4n} - q^{1+2n} - 1 ) }{1 + q^{1+2n}} \\ 
 \times \left[ \begin{matrix} \frac{1}{q}, q, q, q, q \vspace{1mm} \\ 
 q^2, q^2, q^2, q^{1-2n}, q^{2+2n} \end{matrix} \, \Bigg| \, q^2 \right]_{n}
 = 
 \frac{(1-q)^2 (1+q)}{q} \left[ \begin{matrix} q, q \vspace{1mm} \\ 
 q^2, q^3 \end{matrix} \, \Bigg| \, q^2 \right]_{\frac{1}{2}}, 
\end{multline*}
 our previous $q$-analogue of \eqref{Ramanujanquarter} 
 \cite{Campbell2026} such that
\begin{multline*}
 \sum_{n=0}^{\infty} 
 q^{2 n^2} \frac{ (q - q^{2n}) (2q^{1+4n} - q^{1+2n} - 1 ) }{ (1+q^{2n}) (1 + q^{1+2n}) } \\
 \times \left[ \begin{matrix} \frac{1}{q}, q, q \vspace{1mm} \\ 
 -1, -q, q^2, q^2, q^2 \end{matrix} \, \Bigg| \, q^2 \right]_{n} 
 = 
 \frac{(1-q)^2 (1+q) }{2} \left[ \begin{matrix} q, q \vspace{1mm} \\ 
 q^2, q^3 \end{matrix} \, \Bigg| \, q^2 \right]_{\frac{1}{2}}, 
\end{multline*}
 Au's $q$-analogue of \eqref{Ramanujanquarter}
 such that 
$$ \sum_{n=0}^{\infty} 
 q^{2n^2} \frac{ \left( q;q^2 \right)_{n}^{4} }{ \left( q^2;q^2 \right)_{2n} 
 \left( q^2;q^2 \right)_{n}^{2} } \frac{1+q^{2n+1}-2q^{4n+1}}{1+q^{2n+1}} 
 = \left[ \begin{matrix} q, q \vspace{1mm} \\ 
 q^2, q^2 \end{matrix} \, \Bigg| \, q^2 \right]_{\infty} $$
 Au's $q$-analogue of \eqref{Ramanujanquarter}
 such that 
$$ \sum_{n=0}^{\infty} 
 q^{ n^2} \frac{ \left( q;q^2 \right)_{n}^{2} \left( q^2;q^4 \right)_{n} }{ 
 \left( q^4;q^4 \right)_{n}^{3} } (1-q^{6n+1}) 
 = \left[ \begin{matrix} q^2, q^2 \vspace{1mm} \\ 
 q^4, q^4 \end{matrix} \, \Bigg| \, q^4 \right]_{\infty}, $$
 and Au's $q$-analogue of \eqref{Ramanujanquarter}
 such that 
$$ \sum_{n=0}^{\infty} 
 q^{ 2 n} \frac{ \left( q;q^2 \right)_{n}^{4} }{ 
 \left( q^2;q^2 \right)_{2n} \left( q^2;q^2 \right)_{n}^{2} } \frac{2-q^{2n} - q^{4n+1}}{1+q^{2n+1}} 
 = \left[ \begin{matrix} q, q, q, q \vspace{1mm} \\ 
 q^2, q^2, q^2 \end{matrix} \, \Bigg| \, q^2 \right]_{\infty} \sum_{n=1}^{\infty} \frac{q^{n-1}}{ \left( q;q^2 \right)_{n} }. $$
 The above Guo $q$-analogue of Ramanujan's formula was proved in an alternative way by Chen and Chu 
 through the use of Carlitz inversions of
 the $q$-Pfaff-Saalsch\"{u}tz theorem \cite{ChenChu2023}. 

\subsection{The Fabry--Guillera formula}\label{subFabryGuillera}
 In Section \ref{subquarter}, we prove the relation 
\begin{multline*}
 \frac{(1-q)^3 (1+q) \left(1 + q^2\right)}{q}
 \sum_{k = 0}^{\infty} 
 \frac{q^{2k}}{(1-q^{2k+1})^{2} } = \\ \sum_{n=0}^{\infty} q^{2n} 
 \left[ \begin{matrix} 
 q^{2}, q^{2}, q^{2}, q^{4} \vspace{1mm} \\ 
 q^{3}, q^{3} \end{matrix} \, \Bigg| \, q^{2} \right]_{n} 
 \frac{2-q^{2n+1} - q^{4n+3}}{(q^6;q^4)_{n} (q^8;q^4)_{n}} 
\end{multline*}
 for $|q| > 1$. This provides a $q$-analogue of the Ramanujan-type formula 
\begin{equation}\label{FabryGuillera}
 \frac{\pi^2}{4} = 
 \sum_{n = 0}^{\infty} 
 \left( \frac{1}{4} \right)^{n} \left[ \begin{matrix} 
 1, 1, 1 
 \vspace{1mm} \\ 
 \frac{3}{2}, \frac{3}{2}, \frac{3}{2}
 \end{matrix} \right]_{n} (3 n + 2), 
 \end{equation}
 with regard to the survey below, and 
 with \eqref{FabryGuillera} having originally been due (in an equivalent form) to Fabry in 1911
 \cite[p.\ 129]{Fabry1911} 
 and having subsequently been attributed by many authors Guillera, 
 who independently rediscovered \eqref{FabryGuillera} via the WZ method in 2008 \cite{Guillera2008}. 
 We also introduce a further $q$-analogue of \eqref{FabryGuillera} whereby 
\begin{multline*}
 (1-q) (1-q^2) 
 \sum_{k = 0}^{\infty} 
 \left[ \begin{matrix} 
 q^2 \vspace{1mm} \\ 
 q^{3} \end{matrix} \, \Bigg| \, q^{2} \right]_{k} \frac{q^{2k}}{1-q^{2k+2}} = \\ 
 \sum_{n=0}^{\infty} q^{2n} 
 \frac{ \left( q^2;q^2 \right)_{n}^{4} }{ \left( q^{3};q^{2} \right)_{n}^{2} 
 \left( q^{4};q^{4} \right)_{n} \left( q^{6};q^{4} \right)_{n} }
 \frac{2 - q^{2n+1} - q^{4n+3}}{1 + q^{2n+2}} 
\end{multline*}
 for $|q| > 1$. 

 There have been a number of past research contributions involving a variety of different methods in the derivation of $q$-analogues 
 of equivalent formulations of the Fabry--Guillera formula in \eqref{FabryGuillera}, 
 which is representative of the research interest in the new $q$-analogue shown above
 of the Fabry--Guillera formula. 
 Notably, Hou et al.\ \cite{HouKrattenthalerSun2019} introduced and proved 
 the $q$-analogue
\begin{multline*}
 \frac{1}{2} \sum_{n=0}^{\infty} \frac{q^{2n}}{(1-q^{2n+1})^{2}} = \\ 
 \sum_{n=0}^{\infty} q^{2n(n+1)} 
 \frac{ \left( q^{2};q^2 \right)_{n}^{3} }{ \left( q;q^2 \right)_{n+1}^{3} 
 \left( -1; q \right)_{2n+3} } (1+q^{2n+2} - 2 q^{4n+3})
\end{multline*}
 of the Fabry--Guillera formula, along with the $q$-analogue 
\begin{equation}\label{secondHKS}
 \sum_{n=0}^{\infty} q^{\frac{n(n+1)}{2}} 
 \frac{1-q^{3n+2}}{1-q} \frac{ \left( q;q \right)_{n}^{3} \left( -q;q \right)_{n} }{ 
 \left( q^{3};q^{2} \right)_{n}^{3} } 
 = (1-q)^2 \left[ \begin{matrix} 
 q^{2}, q^{2}, q^{2}, q^{2} \vspace{1mm} \\ 
 q, q, q, q \end{matrix} \, \Bigg| \, q^{2} \right]_{\infty}
\end{equation}
 of the Fabry--Guillera formula. 

 Chen and Chu \cite{ChenChu2021} obtained an equivalent version of \eqref{secondHKS} using the relation 
\begin{multline*}
 \sum_{k=0}^{\infty} 
 \frac{1-q^{3k}a}{1-a} 
 \left[ \begin{matrix} 
 b, d, \frac{qa}{bc} \vspace{1mm} \\ 
 q \end{matrix} \, \Bigg| \, q \right]_{k} 
 \left[ \begin{matrix} 
 a \vspace{1mm} \\ 
 \frac{q^2 a}{b}, \frac{q^2a}{d}, q b d \end{matrix} \, \Bigg| \, q^{2} \right]_{k} q^{\binom{k+1}{2}} 
 = \\ 
 \left[ \begin{matrix} 
 q^2 a, qb, qd, \frac{q^2a}{bd} \vspace{1mm} \\ 
 q, \frac{q^2a}{b}, \frac{q^2 a}{d}, q b d \end{matrix} \, \Bigg| \, q^{2} \right]_{\infty} 
\end{multline*}
 due to Rahman \cite{Rahman1993}. 
 Chu \cite{Chu2018} also obtained an equivalent version of 
 \eqref{secondHKS} 
 using recursions for $q$-analogues of the $\Omega$-sums 
 involved in previous references from Chu et al., 
 as in the work of Chu and Zhang on the acceleration of Dougall's ${}_{5}F_{4}$-summation
 \cite{ChuZhang2014}. 
 Wei \cite{Wei2020} also obtained an equivalent formulation of 
 \eqref{secondHKS} 
 using a terminating $q$-hypergeometric relation due to Chu~\cite{Chu1995} 
 via an inversion technique applied using Jackson's $q$-analogue 
 of the Dougall--Dixon theorem. 
 The variety of published proofs of the $q$-analogue 
 in \eqref{secondHKS} 
 of the Fabry--Guillera formula
 motivate the $q$-analogue of this formula we introduce, 
 via {\tt EKHAD}-normalization. 
 
 Wang and Zhong \cite{WangZhong2023} introduced the $q$-analogue 
\begin{multline*}
 \sum_{n=0}^{\infty} q^{\frac{n^2 + 5n+2}{2}} \frac{ [3n+4]_{q} }{ [2n+3]_{q}^{2} } 
 \frac{ \left( q;q \right)_{n}^{3} \left( -q;q \right)_{n} }{ \left( q^{3};q^{2} \right)_{n}^{3} } 
 = \\ (1-q)^2 \left[ \begin{matrix} 
 q^2, q^2, q^2, q^2 \vspace{1mm} \\ 
 q, q, q, q \end{matrix} \, \Bigg| \, q^{2} \right]_{\infty} - (1+q) 
\end{multline*}
 of an equivalent version of the Fabry--Guillera formula, using a $q$-rational reduction technique. 
 In our previous work \cite{Campbell2025further}, 
 we introduced 
 the $q$-series identity such that 
\begin{multline*} 
 \frac{q^{(k-1)^2}}{(q;q)_{k-1}^2} \sum_{n=0}^{\infty} 
 \frac{q^{n} \left(2 q^k-q^{n+1}-q^{2 n+2}\right)}{1+q^{n+1}} 
 \frac{(q;q)_n^4}{ (q;q)_{2 n+1} (q;q)_{n-k+1}^2} = \\ 
 \frac{1}{(q;q)_{\infty }^4} \sum_{n=0}^{\infty} q^{(k+n)^2} \left((q;q)_{\infty }^3-\left(q^{1-n - 
 k};q\right)_{\infty }^2\right) \left(q^{1+n+k};q\right)_{\infty }^2, 
\end{multline*}
 with the $k = \frac{1}{2}$ providing a $q$-analogue of the Fabry--Guillera formula. 

\subsection{Ap\'{e}ry's series for $\zeta(2)$}\label{subApery}
 We introduce the $q$-analogue $$ (1-q)^2 (1+q) \sum_{n=0}^{\infty} 
 \frac{q^n}{(1-q^{n+1})^2} = \sum_{n=0}^{\infty} q^{n} 
 \frac{ (q;q)_{n}^{2} }{ (q^3;q^2)_{n} (q^4;q^2)_{n} } 
 (2+q^{n+1}) $$
 for $|q| > 1$ 
 of the formula 
\begin{equation}\label{originalApery}
 \frac{\pi^2}{9} = 
 \sum_{k = 0}^{\infty} 
 \left( \frac{1}{4} \right)^{n} \left[ \begin{matrix} 
 1 
 \vspace{1mm} \\ 
 \frac{3}{2} 
 \end{matrix} \right]_{k} \frac{1}{k+1} 
 \end{equation}
 attributed to Ap\'{e}ry, as in the work of Chu and Zhang \cite[Example 74]{ChuZhang2014}. An equivalent version (up to a reindexing) of 
 \eqref{originalApery} played a key step in Ap\'{e}ry's renowed proof \cite{Apery1979,vanderPoorten197879} of the irrationality of 
 $\zeta(2)$ for the Riemann zeta function $\zeta(x) = \sum_{n=1}^{\infty} \frac{1}{n^x}$. This motivates the new $q$-analogue of
 Ap\'{e}ry's formula given in this paper,  along with number-theoretic applications of this $q$-analogue.  Our $q$-analogue of Ap\'{e}ry's  
 series for $\zeta(2)$ is also motivated by Hessami Pilehrood and Hessami Pilehrood's   $q$-analogue  
 \cite{HessamiPilehroodHessamiPilehrood2011} of the Bailey--Borwein--Bradley   identity, which can be thought of as an extension of  
 Ap\'{e}ry's series for $\zeta(2)$ and $\zeta(3)$. 

\subsection{Zeilberger's series of convergence rate $\frac{1}{64}$}\label{sectionZeilberger}
 A remarkable result due to Zeilberger is such that 
\begin{equation}\label{displayZeilberger}
 \frac{4\pi^2}{3} = 
 \sum_{k = 0}^{\infty} 
 \left( \frac{1}{64} \right)^{n} \left[ \begin{matrix} 
 1, 1, 1 
 \vspace{1mm} \\ 
 \frac{3}{2}, \frac{3}{2}, \frac{3}{2} 
 \end{matrix} \right]_{k} (21k + 13) 
 \end{equation}
 and was introduced in 1993 \cite{Zeilberger1993} via the WZ method and was subsequently obtained by Guillera \cite{Guillera2008}, 
 again by the WZ method. We introduce the $q$-analogue 
\begin{multline*}
 -(1-q)^3 \sum_{k=0}^{\infty} \frac{q^{k}}{(1-q^{k+1})^2} = \sum_{n=0}^{\infty} q^{n} 
 \frac{ \left( q;q \right)_{n}^{6} }{ \left( q^{2};q^{2} \right)_{n}^{3} 
 \left( q^{3};q^{2} \right)_{n}^{3} } \times \\ 
 \frac{-3 q^{n+1}+q^{2 n+1}-2 q^{2 n+2}+3 q^{3 n+2}+3 q^{4 n+3}+q^{5 n+4}-3}{\left(q^{n+1}+1\right)^3} 
\end{multline*}
 for $|q| > 1$ of the Zeilberger formula in \eqref{displayZeilberger}. This may be considered in relation to the previously
 known $q$-analogue 
\begin{multline*}
 (1-q)^{2} \sum_{n=1}^{\infty} \frac{ n q^{n} }{1-q^{n}} = \sum_{n=1}^{\infty} \frac{q^{3n^2-3n+1}}{ \left[ \begin{smallmatrix} 
 2 n \\ 
 n 
 \end{smallmatrix} \right]_{q}^{3} [n]_{q}^{3} } 
 \big( (3q^{4n-1} + 6 q^{3n-1} - \\ q^{2n} + 8 q^{2n-1} - 4 q^{n} + 8 q^{n-1} + 1) [n]_{q} - 8 q^{n} \big) 
\end{multline*}
 of \eqref{displayZeilberger} introduced and proved by 
 Hessami Pilehrood and Hessami Pilehrood in 2011 \cite{HessamiPilehroodHessamiPilehrood2011}. 

 \subsection{A $q$-analogue of a BBP-type formula}
 The new $q$-identity 
\begin{multline*}
 q(1 - q^2) 
 \sum_{n=0}^{\infty} q^{2n}
 \left[ \begin{matrix} 
 q \vspace{1mm} \\ 
 q^2 \end{matrix} \, \Bigg| \, q^2 \right]_{n} 
 \frac{1}{1-q^{2n+1}} = \sum_{n=0}^{\infty} (-1)^{n} 
 q^{-n^2-n} \times \\ 
 \frac{ (q;q^2)_{n} (q^2;q^2)_{n} }{(q^4;q^4)_{n} (q^6;q^4)_{n} } 
 \frac{1 - q^{6 n+4}-q^{8 n+3}-q^{8 n+4}+q^{10 n+5}+q^{12 n+6}}{\left(1-q^{4 n+1}\right) \left(1-q^{4 n+3}\right)}. 
\end{multline*}
 for $|q| > 1$ is a $q$-analogue of the BBP-type formula $$ \pi = \sum_{n=0}^{\infty} \left( -\frac{1}{4} \right)^{n} \left( \frac{2}{4n+1} + 
 \frac{2}{4n+2} + \frac{1}{4n+3} \right) $$ attributed to Adamchik and Wagon \cite{AdamchikWagon1996}. This provides a 
 companion to a $q$-analogue due to Chu \cite{Chu2018} of the Adamchik--Wagon BBP-type formula $$ \frac{3\pi}{2} = \sum_{n = 
 0}^{\infty} \left( -\frac{1}{4} \right)^{n} \left( \frac{1}{4n+1} + \frac{2}{4n+2} + \frac{1}{4n+3} \right). $$ Apart from the work of Chu 
 \cite{Chu2018}, there does not seem to be much known about $q$-analogues of BBP-type formulas. 

\section{Multiparameter {\tt EKHAD}-normalization}
 Our $q$-analogues for accelerated hypergeometric series are organized as special cases of the Theorems presented below. 

\subsection{$q$-analogues for the convergence rate $\frac{1}{4}$}\label{subquarter}
 By analogy with how Wilf's acceleration method \cite{Wilf1999} was applied in the first author's previous work by setting $F(n, k)$ as in 
 for free parameters $a$ and $b$ and by applying Zeilberger's algorithm to \eqref{freelowerquarter} to produce a first-order difference, 
 inhomogeneous difference equation, we require a homogeneous and $q$-WZ difference equation to be satisfied, according to our
 {\tt EKHAD}-normalization method. In this direction, the $q$-WZ pair required to prove the following Theorem is determined by 
 applying {\tt EKHAD}-normalization to a $q$-analogue of a generalization of \eqref{freelowerquarter}, namely 
\begin{equation}\label{prototypeqF}
 F(n, k) := q^{k} \left[ \begin{matrix} 
 q^{a}, q^{b} \vspace{1mm} \\ 
 q^{n+c}, q^{n+d} \end{matrix} \, \Bigg| \, q \right]_{k}, 
\end{equation}
 for free parameters $a$, $b$, $c$, and $d$. 

\begin{theorem}\label{theoremquarter}
 Letting

 \ 
 
\noindent $ p(n) = q^{a+b+c+1}-q^{a+b+c+d+n} + q^{a+b+d+1} -  q^{a+c+d+n+1}-q^{b+c+d+n+1}+q^{2 c+2 d+3 n}$, 

 \ 

\noindent the relation 
\begin{multline*}
 (q^{a+b} - q^{c+d}) (q^{a+b+1} - q^{c+d}) 
 {}_{3}\phi_{2}\!\!\left[ \begin{matrix} 
 q^{a}, q^{b}, q \vspace{1mm}\\ 
 q^{c}, q^{d} \end{matrix} \ \Bigg| \ q; q 
 \right] = \\ 
 \sum_{n=0}^{\infty} q^{n} 
 \left[ \begin{matrix} 
 q^{-a+c}, q^{-b+c}, q^{-a+d}, q^{-b+d} \vspace{1mm} \\ 
 q^{c}, q^{d} \end{matrix} \, \Bigg| \, q \right]_{n} 
 \frac{p(n)}{ 
 \big( q^{1-a-b+c+d};q^2 \big)_{n} \big( q^{2-a-b+c+d};q^2 \big)_{n} }
 \end{multline*}
 holds if $\big| \frac{q^{a + b + 1}}{q^{c + d}} \big| < 1 < |q|$. 
\end{theorem}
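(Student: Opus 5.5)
We follow the {\tt EKHAD}-normalization scheme of Section~\ref{sectionEKHAD}, taking as input the function $F(n,k)$ in \eqref{prototypeqF}. The left-hand ${}_3\phi_2$ is, up to the factor $(q;q)_k$ cancelling against the $q$ in the upper row, exactly $\sum_{k\ge 0}F(0,k)$.

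The first step is to obtain the first-order $q$-Zeilberger recurrence for $F$. It has the form
\begin{equation*}
p_1(n)F(n+1,k)-p_2(n)F(n,k)=G(n,k+1)-G(n,k),
\end{equation*}
where $G=R\,F$ for an explicit rational function $R(n,k)$ in $q^n,q^k$. Since $F(n+1,k)/F(n,k)=\frac{(1-q^{n+c})(1-q^{n+d})}{(1-q^{n+c+k})(1-q^{n+d+k})}$, we will search for a certificate that is linear in $q^k$ over a common denominator. Once $p_1,p_2,R$ are in hand, we form
\begin{equation*}
\overline F(n,k)=(-1)^n\prod_{i=0}^{n-1}\frac{p_1(i)}{p_2(i)}\,F(n,k),\qquad \overline G(n,k)=(-1)^n\prod_{i=0}^{n-1}\frac{p_1(i)}{p_2(i)}\,G(n,k),
\end{equation*}
and verify $\overline F(n+1,k)-\overline F(n,k)=\overline G(n,k+1)-\overline G(n,k)$. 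This reduces to dividing the recurrence by $F(n,k)$ and checking a rational-function identity, which is routine.

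We expect the products $\prod p_1/p_2$ to produce the $q$-shifted factorials $(q^{c-a},q^{c-b},q^{d-a},q^{d-b};q)_n/(q^{1-a-b+c+d};q^2)_n(q^{2-a-b+c+d};q^2)_n$ together with the factors of $(q^c;q)_n$ and $(q^d;q)_n$. Evaluating $\overline G(n,0)$, with $F(n,0)=1$, should then yield exactly the summand $q^n[\cdots]_n\,p(n)/(\cdots)$, with the prefactor $(q^{a+b}-q^{c+d})(q^{a+b+1}-q^{c+d})$ coming from normalizing at $n=0$ (multiplying through by $p_2$-type factors at $i=0$). Matching this bookkeeping precisely to the stated $p(n)$ is a computation to be checked symbolically.

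For the analytic part, we sum the WZ equation over $0\le n\le m$ and let $m\to\infty$. We first need $\overline F(m+1,k)\to 0$ for each fixed $k$. Since $|q|>1$, write everything in terms of $q^{-1}$: the ratio $p_1(i)/p_2(i)$ tends to a constant of the form $q^{a+b+1-c-d}$ times bounded factors, so the hypothesis $|q^{a+b+1-c-d}|<1$ gives geometric decay, while $F(m,k)\to$ a finite limit. Telescoping in $k$ then gives
\begin{equation*}
\sum_{k=0}^{K-1}\overline F(0,k)=\sum_{n}\overline G(n,0)-\sum_n\overline G(n,K).
\end{equation*}
The main obstacle is showing $\sum_n\overline G(n,K)\to 0$ as $K\to\infty$. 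For this we plan to bound $|\overline G(n,K)|\le C\,\rho^n\,\varepsilon_K$ uniformly, with $\rho<1$ coming from the same normalizing ratio, and $\varepsilon_K\to0$ because $F(n,K)$ contains $q^K(q^a,q^b;q)_K/(q^{n+c},q^{n+d};q)_K$. In base $q^{-1}$ this behaves like $(q^{a+b+1-c-d-2n})^{K}$ times bounded terms, and the extra $q^{-2nK}$ helps. Dominated convergence then justifies the interchange. Letting $K\to\infty$ gives \eqref{FkGn}, which is the theorem after dividing out $\overline F(0,k)=F(0,k)$.
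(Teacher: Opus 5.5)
Your plan has the same architecture as the paper's proof: an {\tt EKHAD}-normalized $q$-WZ pair, telescoping in $n$ and then in $k$, and a uniform estimate that kills $\sum_n\overline G(n,K)$. However, the algebraic half is left undone, and the one explicit formula you commit to is wrong.

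With your convention $p_1(n)F(n+1,k)-p_2(n)F(n,k)=G(n,k+1)-G(n,k)$ and $c_n=\prod_{i<n}p_1(i)/p_2(i)$, one gets $c_{n+1}F(n+1,k)-c_nF(n,k)=\frac{c_n}{p_2(n)}\bigl(G(n,k+1)-G(n,k)\bigr)$. So the companion must be $\overline G=\frac{c_n}{p_2(n)}G$. Your $\overline G=(-1)^nc_nG$ fails on two counts: the $(-1)^n$ belongs to the opposite sign convention, and the factor $1/p_2(n)$ is missing. That missing factor is exactly what produces the stated summand. Normalization alone gives
\[ \overline F(n,k)=q^k\frac{(q^a,q^b;q)_k}{(q^{c+n},q^{d+n};q)_k}\,\frac{(q^{c-a},q^{c-b},q^{d-a},q^{d-b};q)_n}{(q^c,q^d;q)_n\,(q^{c+d-a-b-1};q)_{2n}}, \]
whose $q^2$-factorials start at $q^{c+d-a-b-1}$, not at $q^{1-a-b+c+d}$ as you predict. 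The certificate (given explicitly in the paper) is
\[ \overline R(n,k)=q^{n-k}\,\frac{q^{2c+2d+3n+k}-q^{a+b+c+d+n+k}+q^{a+b+c+1}+q^{a+b+d+1}-q^{a+c+d+n+1}-q^{b+c+d+n+1}}{(q^{a+b}-q^{c+d+2n})(q^{a+b+1}-q^{c+d+2n})}. \]
At $k=0$ its numerator is $p(n)$. Its denominator, which is essentially your $p_2(n)$, satisfies
\[ \frac{1}{(q^{c+d-a-b-1};q)_{2n}\,(q^{a+b}-q^{c+d+2n})(q^{a+b+1}-q^{c+d+2n})}=\frac{1}{(q^{a+b}-q^{c+d})(q^{a+b+1}-q^{c+d})\,(q^{c+d-a-b+1};q)_{2n}}, \]
which is where both the stated denominators and the prefactor come from. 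Until you exhibit such a pair and check the WZ equation, nothing shows that a first-order recurrence exists at all, or that it yields this particular $p(n)$.

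The analytic half is sound in outline, with two corrections.

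First, the normalizing ratio $c_{n+1}/c_n$ does not tend to $q^{a+b+1-c-d}$. Its numerator grows like $q^{4n}$ and its denominator like $q^{6n}$, so it behaves like $q^{1-2n-c-d}\to0$. Hence $\overline F(m,k)\to0$ super-geometrically without using the hypothesis. The condition $|q^{a+b+1}/q^{c+d}|<1$ is needed only in the $k$-direction: for convergence of the ${}_3\phi_2$ and for your $\varepsilon_K$.

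Second, your bound $|\overline G(n,K)|\le C\rho^n\varepsilon_K$ silently requires the certificate to be bounded uniformly in $n$ and $K$. The paper proves $|\overline R(n,k)|\le M$ by rewriting in powers of $q^{-1}$. It then uses the hypothesis to keep $|1-q^{a+b+1-c-d-2n}|$ and $|1-q^{a+b-c-d-2n}|$ bounded below by $1-|q^{a+b+1}/q^{c+d}|$.

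Granting that bound, your direct estimate $|F(n,K)|\le C'|q^{a+b+1}/q^{c+d}|^K|q|^{-2nK}$ works. It needs $(q^{-c-n},q^{-d-n};q^{-1})_K$ to stay bounded away from $0$ uniformly in $n$ and $K$, which holds because no $q^{c+m}$ or $q^{d+m}$ equals $1$. This estimate is a cleaner substitute for the paper's ratio argument with $\overline k_0$ and $\rho$, because it makes the uniformity in $n$ explicit.
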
 

\begin{proof}
 For convenience, we write $\alpha = q^a$, $\beta = q^b$, $\gamma = q^c$,  and $\delta = q^d$.  Define  $$ \overline{F}(n, k; q) :=  
 q^{k} \left[ \begin{matrix} 
 \alpha, \beta \vspace{1mm} \\ 
 \gamma q^{n}, \delta q^{n} \end{matrix} \, \Bigg| \, q \right]_{k} 
 \left[ \begin{matrix} 
 \frac{\gamma}{\alpha}, \frac{\gamma}{\beta}, \frac{\delta}{\alpha}, \frac{\delta}{\beta} \vspace{1mm} \\ 
 \gamma, \delta \end{matrix} \, \Bigg| \, q \right]_{n} \frac{1}{ \left( \frac{\gamma \delta}{\alpha \beta q}; q \right)_{2n} },  $$  i.e., as the 
 {\tt EKHAD}-normalized version of \eqref{prototypeqF}. We then let 
\begin{multline*}
 \overline{R}(n, k; q) = \\ q^{n-k} \frac{ -\alpha \beta \gamma \delta q^{k+n} + \alpha \beta \gamma 
 q + \alpha \beta \delta q - \alpha \gamma \delta q^{n+1} - \beta \gamma \delta q^{n+1} 
 + \gamma^{2} \delta^{2} q^{3n+k} }{(\alpha \beta - \gamma \delta q^{2n}) (\alpha \beta q - \gamma \delta q^{2n})}. 
\end{multline*}
 We then define 
 $\overline{G}(n, k; q) := \overline{R}(n, k; q) \overline{F}(n, k; q)$. We may thus verify that 
 the $q$-WZ difference equation 
\begin{equation}\label{firstdiff}
 \overline{F}(n+1, k; q) - \overline{F}(n, k; q) = \overline{G}(n, k+1; q) - \overline{G}(n, k; q) 
\end{equation}
 holds. The telescoping argument from Section \ref{sectionEKHAD}
 applied to \eqref{firstdiff} then 
 gives us that 
\begin{equation}\label{kbarN}
 \sum_{k=0}^{\overline{k} - 1} \left( \overline{F}(N,k;q) - \overline{F}(0,k;q) \right) 
 = \sum_{n=0}^{N-1} \left( \overline{G}(n,\overline{k};q) - \overline{G}(n,0;q) \right). 
\end{equation}
 For $|q| > 1$, we have that 
\begin{multline*}
 \lim_{N \to \infty} \frac{ \overline{F}(N+1,k;q) }{\overline{F}(N,k;q)} 
 = \\
 \lim_{N \to \infty} \frac{ (\gamma q^{N} - \alpha) (\gamma q^{N} - \beta) (\delta q^{N} - \alpha) (\delta q^{N} - 
 \beta) 
 q }{ (1 - \gamma q^{N+k}) (1 - \delta q^{N + k}) (\gamma \delta q^{2N} - \alpha \beta q) (\gamma \delta q^{2N} - 
 \alpha \beta) } = 0, 
\end{multline*} 
 so that $\lim_{N \to \infty} \overline{F}(N,k;q) = 0$. Similarly, and again for $|q| > 1$, we find that 
 $$ \lim_{n \to \infty} \frac{ \overline{G}(n+1,k;q) }{\overline{G}(n,k;q)} 
 = \lim_{n \to \infty} \frac{ \overline{R}(n+1,k;q) }{\overline{R}(n,k;q)} \lim_{n \to \infty} \frac{\overline{F}(n+1,k;q)}{\overline{F}(n,k;q)} = 0, $$
 noting that $ \lim_{n \to \infty} \frac{ \overline{R}(n+1,k;q) }{\overline{R}(n,k;q)} = 1$. 
 Letting $N \to \infty$ in \eqref{kbarN}, we obtain that 
\begin{equation}\label{bothconverge}
 \sum_{k=0}^{\overline{k} - 1} \overline{F}(0, k; q) = \sum_{n=0}^{\infty} \overline{G}(n, 0; q) - \sum_{n = 
  0}^{\infty} \overline{G}(n, \overline{k}; q). 
\end{equation}
 Since $$ \lim_{k \to \infty} \frac{ \overline{F}(0, k+1;q) }{\overline{F}(0, k;q)} = \frac{\alpha \beta q}{\gamma \delta}, $$
 we require the condition $\big| \frac{\alpha \beta q}{\gamma \delta} \big| < 1$
 to ensure the convergence of both sides of \eqref{bothconverge}, so we assume that this condition holds. 
 From the definition of $\overline{R}(n, k;q)$, we find that 
\begin{multline*}
 \left| \overline{R}(n, k; q) \right| = 
 \Bigg| \frac{1}{ \left( \alpha \beta q^{-2n} - \gamma \delta \right) 
 \left( \alpha \beta q^{-2n+1} - \gamma \delta \right)} 
 \big( -\alpha \beta \gamma \delta q^{-2n} + \\ \alpha \beta \gamma q^{-3n - k + 1} 
 + \alpha \beta \delta q^{-3n-k+1} - \alpha \gamma \delta q^{-2n - k + 1} - \beta \gamma \delta q^{-2n-k+1}
 + \gamma^{2} \delta^{2} \big) \Bigg|. 
\end{multline*}
 Consequently, we have that  $$ \left| \overline{R}(n, k; q) \right|   \leq \frac{ |\alpha \beta \gamma \delta| + |\alpha \beta \gamma q|      + 
  |\alpha \beta \delta q| + |\alpha \gamma \delta q| + |\beta \gamma \delta q|     + |\gamma^{2} \delta^{2}| }{ \gamma^{2} \delta^{2}  
  \left| 1 - \frac{\alpha \beta q}{\gamma \delta} q^{-2n-1} \right|    \left| 1 - \frac{\alpha \beta q}{\gamma \delta} q^{-2n} \right| }, $$   and  
    we can conclude that    $ \big| \overline{R}(n, k; q) \big| \leq M$,     for a value $M$ independent of $n$ and $k$,  with 
\begin{equation}\label{boundGwithF}
 \left| \overline{G}(n, k;q) \right| =    \left| \overline{R}(n, k;q) \right| \, \left| \overline{F}(n, k; q) \right|    \leq M \, \left| \overline{F}(n, k;  
  q) \right| 
\end{equation}
 for the same value $M$. Since 
\begin{equation}\label{aktoadd}
 \left| \frac{\overline{F}(n, k+1;q)}{\overline{F}(n,k;q)} \right|    = \left| q \frac{ \left( 1 - \alpha q^{k} \right) \left( 1 - \beta q^{k} 
 \right) }{ \left( 1 - \gamma q^{n+k} \right) \left( 1 - \delta q^{n+k} \right) } \right|, 
\end{equation}
 by writing   $$ \mathcal{A}_{k} = q^{-2n} \frac{\left( 1 - \alpha^{-1} q^{-k} \right) 
 \left( 1 - \beta^{-1} q^{-k} \right) }{ \left( 1 - \gamma^{-1} q^{-n-k} \right) \left( 1 - 
 \delta^{-1} q^{-n-k} \right) }, $$
 this leads us to rewrite \eqref{aktoadd} so that 
\begin{equation*}
 \left| \frac{\overline{F}(n, k+1;q)}{\overline{F}(n,k;q)} \right| 
 = \left| \frac{\alpha \beta q}{\gamma \delta} \mathcal{A}_{k} \right|, 
\end{equation*}
 recalling that $|q| > 1$.  Since $\lim_{k \to \infty} \mathcal{A}_{k} = q^{-2n}$,   we can conclude, again using the condition that $\big|  
 \frac{\alpha \beta q}{\gamma \delta} \big| < 1$,  that 
\begin{equation*}
 \left| \frac{\overline{F}(n, k+1;q)}{\overline{F}(n,k;q)} \right|  < 1. 
\end{equation*}
 Recalling \eqref{boundGwithF}, we have, for $k > \overline{k}_{0}$, that 
\begin{align*}
 \left| \overline{F}(n, k; q) \right| & \leq \rho^{k - \overline{k}_{0}} \left| \overline{F}(n, \overline{k}_{0}; q) \right| \ \text{and} \\ 
 \left| \overline{G}(n, k;q) \right| & \leq \rho^{k - \overline{k}_{0}} M \left| \overline{F}(n, \overline{k}_{0}; q) \right|, 
\end{align*}
 for a positive value $\rho$. 
 So, for $\overline{k} > \overline{k}_{0}$, we find that
\begin{equation}\label{kbarineq}
 \sum_{n=0}^{\infty} \left| \overline{G}(n, \overline{k};q) \right| 
 \leq M \, \rho^{\overline{k} - \overline{k}_{0}} \sum_{n=0}^{\infty} \left| \overline{F}(n, \overline{k}_{0}; q) \right|, 
\end{equation} 
 with the right-hand side of \eqref{kbarineq} approaching $0$
 as $\overline{k} \to \infty$. 
 So, by letting $\overline{k} \to \infty$ in \eqref{bothconverge}, 
 we obtain that 
 $$ \sum_{k=0}^{\infty} \overline{F}(0, k;q) = \sum_{n=0}^{\infty} \overline{G}(n,0;q), $$
 which is equivalent to the desired result. 
\end{proof}

\begin{example}
 Setting $(a, b, c, d) = \big( \frac{1}{2}, \frac{1}{2}, 2, 2 \big)$ in Theorem \ref{theoremquarter}, 
 we obtain the new $q$-analogue of the Ramanujan series for $\frac{1}{4}$ 
 highlighted as a motivating result in Section \ref{subRamanujan}. 
\end{example}

\begin{example}
 Setting $(a, b, c, d) = \big( \frac{1}{2}, \frac{1}{2}, \frac{3}{2}, \frac{3}{2} \big)$ 
 in Theorem \ref{theoremquarter}, we obtain 
 the first new $q$-analogue of the Fabry--Guillera formula highlighted 
 as a motivating result in Section \ref{subFabryGuillera}. 
\end{example}

\begin{example}
 Setting $(a, b, c, d) = \big( 1, 1, 2, 2 \big)$ 
 in Theorem \ref{theoremquarter}, we obtain 
 the new $q$-analogue of the Ap\'{e}ry series 
 formula highlighted as a motivating result in Section \ref{subApery}. 
\end{example}
 
\begin{example}
 Setting $(a, b, c, d) = 
 \big( \frac{1}{2}, 1, \frac{3}{2}, \frac{3}{2} \big)$ in Theorem \ref{theoremquarter}, we obtain the $q$-analogue 
\begin{multline*}
 (1-q^3) \sum_{n=0}^{\infty} q^{n} 
 \left[ \begin{matrix} 
 q^2 \vspace{1mm} \\ 
 q^3 \end{matrix} \, \Bigg| \, q^2 \right]_{n} \frac{q^{2n}}{1-q^{2n+1}} = \\ 
 \sum_{n=0}^{\infty} q^{2n} 
 \frac{(q^2;q^2)_{n}^{2}}{(q^5;q^4)_{n} (q^7;q^4)_{n}} \frac{2 - q^{2n+2} - q^{4n+3}}{1-q^{2n+1}} 
 \end{multline*}
 of the formula 
\begin{equation*}
 6 G = 
 \sum_{n = 0}^{\infty} 
 \left( \frac{1}{4} \right)^{n} \left[ \begin{matrix} 
 1, 1 
 \vspace{1mm} \\ 
 \frac{5}{4}, \frac{7}{4} 
 \end{matrix} \right]_{n} \frac{6n+5}{2n+1} 
 \end{equation*}
 proved by Chu and Zhang \cite[Example 84]{ChuZhang2014}. 
\end{example}

\begin{example}
 Setting $(a, b, c, d) = \big( -\frac{1}{2}, \frac{1}{6}, \frac{1}{3}, 1 \big)$ in Theorem \ref{theoremquarter}, we obtain a $q$-analogue 
 of the Ramanujan-type formula 
\begin{equation*}
 \frac{8\sqrt{3}}{3} = 
 \sum_{n = 0}^{\infty} 
 \left( \frac{1}{4} \right)^{n} \left[ \begin{matrix} 
 \frac{1}{6}, \frac{5}{6}, \frac{3}{2} 
 \vspace{1mm} \\ 
 \frac{1}{3}, 1, \frac{4}{3} 
 \end{matrix} \right]_{n} (18n + 1). 
 \end{equation*}
\end{example}

\begin{example}
 Setting $(a, b, c, d) = \big( \frac{1}{3}, \frac{1}{3}, 1, 1 \big)$ in Theorem \ref{theoremquarter}, we obtain a $q$-analogue of 
 the Ramanujan-type formula 
\begin{equation*}
 \frac{3 \Gamma^{3}\left( \frac{1}{3} \right) }{2 \pi^2} = 
 \sum_{n = 0}^{\infty} 
 \left( \frac{1}{4} \right)^{n} \left[ \begin{matrix} 
 \frac{2}{3}, \frac{2}{3}, \frac{2}{3} 
 \vspace{1mm} \\ 
 1, 1, \frac{7}{6} 
 \end{matrix} \right]_{n} (9 n + 2). 
 \end{equation*}
\end{example}

\begin{example}
 Setting $(a, b, c, d) = 
 \big( \frac{1}{6}, \frac{1}{6}, 1, 1 \big)$ in Theorem \ref{theoremquarter}, we obtain a $q$-analogue 
 of the Ramanujan-type formula 
\begin{equation*}
 \frac{2^{4/3} \sqrt{3} \Gamma^{3}\left( \frac{1}{3} \right)}{\pi^2} = 
 \sum_{n = 0}^{\infty} 
 \left( \frac{1}{4} \right)^{n} \left[ \begin{matrix} 
 \frac{5}{6}, \frac{5}{6}, \frac{5}{6} 
 \vspace{1mm} \\ 
 1, 1, \frac{4}{3} 
 \end{matrix} \right]_{n} (18n+5). 
 \end{equation*}
\end{example}

\begin{example}
 Setting $(a, b, c, d) = \big( \frac{1}{3}, -\frac{1}{2}, 1, \frac{1}{6} \big)$ in Theorem \ref{theoremquarter}, we obtain a $q$-analogue of 
 the Ramanujan-type formula 
\begin{equation*}
 -\frac{ \Gamma^{3}\left( \frac{1}{6} \right) }{3 \, 2^{2/3} \pi^{3/2} } = 
 \sum_{n = 0}^{\infty} 
 \left( \frac{1}{4} \right)^{n} \left[ \begin{matrix} 
 -\frac{1}{6}, \frac{2}{3}, \frac{3}{2} 
 \vspace{1mm} \\ 
 \frac{1}{6}, 1, \frac{7}{6} 
 \end{matrix} \right]_{n} (18 n - 1). 
 \end{equation*}
\end{example}

\subsection{$q$-analogues for the convergence rate $-\frac{1}{4}$}
 By analogy with the input function in \eqref{prototypeqF}, we set 
\begin{equation}\label{inputnegquart}
 F(n, k) := q^{k} \left[ \begin{matrix} 
 q^{a}, q^{b+n} \vspace{1mm} \\ 
 q^{n+c}, q^{2n+d} \end{matrix} \, \Bigg| \, q \right]_{k}, 
\end{equation}
 and apply {\tt EKHAD}-normalization to \eqref{inputnegquart}, 
 and, by mimicking the proof of 
 Theorem \ref{theoremquarter}, this provides a powerful method for generating
 $q$-analogues of accelerated series of convergence rate $-\frac{1}{4}$. 

\begin{theorem}\label{theoremnegquart}
 For the $q$-polynomial

 \ 

\noindent $p(n) = q^{n (a-b-1)} \big(-q^{a+b+c+d+3 n}-q^{a+b+c+d+3 n+1}+q^{a+b+c+2 d+5 n}-q^{a+b+2 d+4 n+1}+q^{2 a+b+1}+q^{a+c+2 d+4 n+1}-q^{2 a+d+n+1}+q^{a+2
 d+3 n+1}+q^{b+c+2 d+5 n+1}-q^{c+2 d+4 n+1}+q^{2 c+2 d+5 n}-q^{2 c+3 d+7 n}\big)$, 

 \ 

\noindent the relation 
\begin{multline*}
 (1-q^d) (q^{c+d} - q^{a+b}) (q^{c+d} - q^{a+b+1}) 
 {}_{3}\phi_{2}\!\!\left[ \begin{matrix} 
 q^{a}, q^{b}, q \vspace{1mm}\\ 
 q^{c}, q^{d} \end{matrix} \ \Bigg| \ q; q 
 \right] \\ 
 = \sum_{n=0}^{\infty} (-1)^{n} q^{-\binom{n}{2}} 
 \left[ \begin{matrix} 
 q^{-a+d}, q^{-a+d+1} \vspace{1mm} \\ 
 q^{d+1}, q^{d+2}, q^{-a-b+c+d+1}, q^{-a-b+c+d+2} \end{matrix} \, \Bigg| \, q^2 \right]_{n} \times \\ 
 \left[ \begin{matrix} 
 q^{b}, q^{-a+c}, q^{-b+d} \vspace{1mm} \\ 
 q^{c} \end{matrix} \, \Bigg| \, q \right]_{n} \, p(n)
\end{multline*}
 holds if $\big| \frac{q^{a + b + 1}}{q^{c + d}} \big| < 1 < |q|$. 
\end{theorem}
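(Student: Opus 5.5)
We follow the proof of Theorem \ref{theoremquarter}, now starting from the input function in \eqref{inputnegquart}. Write $\alpha = q^a$, $\beta = q^b$, $\gamma = q^c$, $\delta = q^d$.

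The first step is to run the $q$-Zeilberger algorithm on \eqref{inputnegquart}. This should give a first-order recurrence $p_1(n)F(n+1,k) - p_2(n)F(n,k) = G(n,k+1)-G(n,k)$. Normalizing as in \eqref{displaynormal}, we expect the normalized function to be
$$ \overline{F}(n,k;q) = (-1)^n q^{-\binom{n}{2}} q^{k}\left[ \begin{matrix} \alpha, \beta q^{n} \vspace{1mm} \\ \gamma q^{n}, \delta q^{2n} \end{matrix} \, \Bigg| \, q \right]_{k} \left[ \begin{matrix} \frac{\delta}{\alpha}, \frac{\delta q}{\alpha} \vspace{1mm} \\ \delta q, \delta q^2, \frac{\gamma\delta q}{\alpha\beta}, \frac{\gamma\delta q^2}{\alpha\beta} \end{matrix} \, \Bigg| \, q^2 \right]_{n} \left[ \begin{matrix} \beta, \frac{\gamma}{\alpha}, \frac{\delta}{\beta} \vspace{1mm} \\ \gamma \end{matrix} \, \Bigg| \, q \right]_{n}, $$
possibly multiplied by a constant.

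We then write down the certificate $\overline{R}(n,k;q)$ that the algorithm returns and set $\overline{G} = \overline{R}\,\overline{F}$. The $q$-WZ equation $\overline{F}(n+1,k)-\overline{F}(n,k) = \overline{G}(n,k+1)-\overline{G}(n,k)$ is checked by dividing through by $\overline{F}(n,k)$. After that division every term is a rational function of $q^n$ and $q^k$, so the check is a finite polynomial identity. We then compare $\overline{G}(n,0)$ with the summand on the right-hand side of the theorem. In particular, $p(n)$ should be the numerator of $\overline{R}(n,0)$ after clearing the factors $(1-\delta q^{2n})$ and $(\alpha\beta - \gamma\delta q^{2n})$, $(\alpha\beta q-\gamma\delta q^{2n})$. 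The prefactor $(1-q^d)(q^{c+d}-q^{a+b})(q^{c+d}-q^{a+b+1})$ on the left-hand side accounts for the $n=0$ normalization of these denominators.

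The analytic part repeats the telescoping argument. First, summing the WZ equation gives the analogue of \eqref{kbarN}. Second, we show $\overline{F}(N,k)\to 0$ as $N\to\infty$ for $|q|>1$. The ratio $\overline{F}(N+1,k)/\overline{F}(N,k)$ carries the factor $q^{-N}$ from $q^{-\binom{n}{2}}$. Its $q$-shifted factorial factors contribute growth of order $q^{O(N)}$ in numerator and denominator, with degrees that must be compared. The $k$-part contributes $(\beta q^N;q)_k/((\gamma q^N;q)_k(\delta q^{2N};q)_k)$, which decays. We expect the ratio to tend to $0$, giving superexponential decay. The same bound, together with $\overline{R}(n+1,k)/\overline{R}(n,k)$ having a finite limit, gives convergence of $\sum_n \overline{G}(n,k)$.

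Third, we send $\overline{k}\to\infty$. The ratio $\overline{F}(n,k+1)/\overline{F}(n,k)$ tends to $\frac{\alpha\beta q^{n+1}}{\gamma\delta q^{3n}} = \frac{\alpha\beta q}{\gamma\delta}q^{-2n}$. This has modulus below $\rho<1$ uniformly in $n$ under the hypothesis $|q^{a+b+1-c-d}|<1$. We also need $|\overline{R}(n,k)|\le M$ uniformly in $n$ and $k$. To get this, we rewrite $\overline{R}$ after dividing numerator and denominator by the top powers of $q^n$ and $q^k$, as was done for Theorem \ref{theoremquarter}. Dominated convergence then kills $\sum_n \overline{G}(n,\overline{k})$, which yields $\sum_k \overline{F}(0,k)=\sum_n\overline{G}(n,0)$, i.e.\ the claimed relation.

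The main obstacle is the uniform bound on $\overline{R}$. In Theorem \ref{theoremquarter} the numerator had total degree exactly matching the denominator. Here $p(n)$ carries the prefactor $q^{n(a-b-1)}$ and terms up to $q^{7n}$, so the degrees in $q^n$ and $q^k$ must be balanced carefully. If a global bound fails, we would settle for $|\overline{R}(n,k)|\le C|q|^{sn}$ for some exponent $s$. This weaker bound still works, since the superexponential decay of $\overline{F}(n,\overline{k}_0)$ in $n$ absorbs $|q|^{sn}$ inside $\sum_n$.
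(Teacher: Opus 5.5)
Your route is the paper's own: its proof of Theorem~\ref{theoremnegquart} says only to EKHAD-normalize \eqref{inputnegquart} and mimic Theorem~\ref{theoremquarter}. The gap is in the one concrete object you commit to. Your $\overline{F}$ is not the EKHAD-normalization of \eqref{inputnegquart}, and it is not a constant multiple of it.

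Carry out the $q$-Zeilberger step; the $q$-Gosper certificate needs a $q^{-k}$ term, as in Theorem~\ref{theoremquarter}. Normalizing so that $\overline{F}(0,k)=F(0,k)$ gives
\[
\overline{F}(n,k;q) = (-1)^{n} q^{n(a-b)} q^{-\binom{n}{2}} \frac{(\beta;q)_{n} (\gamma/\alpha;q)_{n} (\delta/\beta;q)_{n} (\delta/\alpha;q)_{2n}}{(\gamma;q)_{n} (\delta;q)_{2n} \bigl(\frac{\gamma\delta}{\alpha\beta q};q\bigr)_{2n}} \, F(n,k).
\]
Compared with yours, this has an extra factor $q^{n(a-b)}$. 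Its denominators are $(\delta;q)_{2n}$ and $(\frac{\gamma\delta}{\alpha\beta q};q)_{2n}$, whereas yours are $(\delta q;q)_{2n}$ and $(\frac{\gamma\delta q}{\alpha\beta};q)_{2n}$. The quotient of the two is $q^{n(a-b)}(1-\delta q^{2n})(1-\frac{\gamma\delta}{\alpha\beta q}q^{2n})(1-\frac{\gamma\delta}{\alpha\beta}q^{2n})$ divided by its value at $n=0$, which genuinely depends on $n$.

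For generic parameters, $F(n,k)$ is not $q$-Gosper summable in $k$. Hence multiplying a $q$-WZ function by such a factor destroys the $q$-WZ property, and the rational identity you propose to verify would be false. What you copied is the $n$-factor of the final summand, which already absorbs the denominator of $\overline{R}(n,0)$. That is exactly what your fourth paragraph says, so your second and fourth paragraphs contradict each other: with your $\overline{F}$, $\overline{R}(n,0)$ would have to equal $p(n)$ divided by the left-hand prefactor, with no denominator at all.

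With the correct $\overline{F}$ one gets
$\overline{R}(n,0) = q^{-n(a-b)}p(n) / \bigl((1-\delta q^{2n})(\alpha\beta-\gamma\delta q^{2n})(\alpha\beta q-\gamma\delta q^{2n})\bigr)$. This is your description up to the factor $q^{-n(a-b)}$. In general,
\[
\overline{R}(n,k;q) = \frac{q^{-k}h_{0}(n)+h_{1}(n)+q^{k}h_{2}(n)}{1-\delta q^{2n+k}}, \qquad h_{2}(n) = -\frac{\delta q^{2n}}{1-\frac{\alpha\beta q}{\gamma\delta}q^{-2n}},
\]
with $h_{0}(n)=O(|q|^{-n})$ and $h_{1}(n)=O(1)$. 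Under $|\frac{\alpha\beta q}{\gamma\delta}|<1<|q|$ this gives $|\overline{R}(n,k)|\le M$ uniformly in $n$ and $k$.

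So your ``main obstacle'' and the fallback bound are unnecessary. The apparent growth from $q^{n(a-b-1)}$ in $p(n)$ comes only from placing the factor $q^{n(a-b)}$ in $\overline{R}$ instead of $\overline{F}$. Once the correct pair is used, the rest of your analytic outline is exactly the argument of Theorem~\ref{theoremquarter} and goes through. That outline is the superexponential decay in $N$, the limiting ratio $\frac{\alpha\beta q}{\gamma\delta}q^{-2n}$ in $k$, and dominated convergence.
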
 

\begin{proof}
 By setting $F(n, k)$ as in \eqref{inputnegquart},   and by, as above, applying {\tt EKHAD}-normalization to $F(n, k)$, 
 we obtain a $q$-WZ pair 
 $(\overline{F}, \overline{G})$. Omitting details, 
 by mimicking the proof of 
 Theorem \ref{theoremquarter}, 
 we arrive at the desired identity 
 $$ \sum_{k=0}^{\infty} \overline{F}(0, k;q) = \sum_{n=0}^{\infty} \overline{G}(n,0;q), $$
 subject to the given conditions, 
 and this is equivalent to the desired result. 
\end{proof}

\begin{example}
 Setting $(a, b, c, d) = \big( \frac{1}{2}, 1, \frac{3}{2}, \frac{3}{2} \big)$ in Theorem \ref{theoremnegquart}, we obtain 
 the $q$-analogue 
\begin{multline*}
 q^2 (1-q) (1-q^3)^{2} 
 \sum_{n=0}^{\infty} 
 \left[ \begin{matrix} 
 q^2 \vspace{1mm} \\ 
 q^3 \end{matrix} \, \Bigg| \, q^2 \right]_{n} 
 \frac{q^{2n}}{1 - q^{2n+1}} = 
 \sum_{n=0}^{\infty} (-1)^{n} 
 q^{-n^2-2n} \times \\ 
 \left[ \begin{matrix} 
 q^2, q^4 \vspace{1mm} \\ 
 q^5, q^5, q^7, q^7 \end{matrix} \, \Bigg| \, q^4 \right]_{n} (q^2;q^2)_{n}^{2} \, 
 \big( 1 - q^{6 n+5} - 2 q^{8 n+5}+q^{10 n+7}+q^{12 n+8} \big) 
\end{multline*}
 of the formula 
\begin{equation*}
 18 G = 
 \sum_{n = 0}^{\infty} 
 \left( -\frac{1}{4} \right)^{n} \left[ \begin{matrix} 
 \frac{1}{2}, 1, 1, 1 
 \vspace{1mm} \\ 
 \frac{5}{4}, \frac{5}{4}, \frac{7}{4}, \frac{7}{4} 
 \end{matrix} \right]_{n} \big( 40	 n^2 + 56n + 19 \big) 
 \end{equation*}
 due to Chu and Zhang \cite[Example 85]{ChuZhang2014}. 
\end{example}

\begin{example}
 Setting $(a, b, c, d) = \big( 1, 1, 2, 2 \big)$ in Theorem \ref{theoremnegquart}, we obtain 
 the $q$-analogue 
\begin{multline*}
 q (1-q^2) 
 \sum_{n=0}^{\infty} 
 \frac{q^n}{(1-q^{n+1})^2} = \\ \sum_{n=0}^{\infty} (-1)^{n} 
 q^{-\binom{n}{2}-n} 
 \frac{ (q;q)_{n} (q^2;q)_{n} }{ (q^3;q^2)_{n} (q^4;q^2)_{n} }
 \frac{1 + q^{n+1} + q^{2n+2} - 2 q^{4n+3} - q^{5n+4}}{ (1-q^{2n+1}) (1-q^{2n+2}) }
\end{multline*}
 of the formula 
\begin{equation}\label{20251q129wwww411qqqqqqqqqqqqq103AM1A}
 \frac{2\pi^2}{3} = 
 \sum_{n = 0}^{\infty} 
 \left( -\frac{1}{4} \right)^{n} \left[ \begin{matrix} 
 1 
 \vspace{1mm} \\ 
 \frac{3}{2} \end{matrix} \right]_{n} \frac{10 n + 7}{(n+1)(2n+1)}, 
\end{equation}
 proved by Chu via well poised $\Omega$-sums \cite[Example 33]{Chu2021Omega}. 
\end{example}

\begin{example}
 Setting $(a, b, c, d) = \big( 1, 1, \frac{3}{2}, 2 \big)$ in Theorem \ref{theoremnegquart}, we obtain the new $q$-analogue
\begin{multline*}
 q^2 (1-q^3) 
 \sum_{n=0}^{\infty} 
 \left[ \begin{matrix} 
 q^2 \vspace{1mm} \\ 
 q^3 \end{matrix} \, \Bigg| \, q^2 \right]_{n} 
 \frac{q^{2n}}{1 - q^{2n+2}} = \sum_{n=0}^{\infty} (-1)^{n} 
 q^{-n^2-n} 
 \frac{(q^2;q^2)_{n}^{2}}{(q^5;q^4)_{n}(q^7;q^4)_{n} } \times \\ 
 \frac{1 + q^{2 n+1}+q^{4 n+2}+q^{6 n+4}-q^{6 n+5}-q^{8 n+5}-q^{8 n+6}-q^{10 n + 
 7} }{\left(1-q^{2 n+1}\right) \left(1 + q^{2 n+1}\right) \left(1 + q^{2
 n+2}\right)}, 
\end{multline*}
 of the formula 
\begin{equation}\label{20q25q1qq12q9wrwrrwwr4r1e1e1e0e3wAwqM1qA}
 \frac{3\pi^2}{8} = 
 \sum_{n = 0}^{\infty} 
 \left( -\frac{1}{4} \right)^{n} \left[ \begin{matrix} 
 1, 1 
 \vspace{1mm} \\ 
 \frac{5}{4}, \frac{7}{4} 
 \end{matrix} \right]_{n} \frac{5n+4}{2n+1}, 
 \end{equation}
 proved by Chu via well poised $\Omega$-sums \cite[Example 32]{Chu2021Omega}. 
 Setting $(a, b, c, d) = \big( \frac{1}{2}, \frac{1}{2}, \frac{3}{2}, \frac{3}{2} \big)$ a further 
 $q$-analogue of the same Chu formula, namely 
\begin{multline*}
 (1-q)^2 q (1+q+q^2) 
 \sum_{n=0}^{\infty} 
 \frac{q^{2n}}{(1-q^{2n+1})^{2} } = 
 \sum_{n=0}^{\infty} (-1)^{n} 
 q^{-n(n+1)} 
 \frac{ \left( q^2; q^2 \right)_{n}^{2} }{ \left( q^{5};q^{4} \right)_{n} 
 \left( q^{7};q^{4} \right)_{n} } \times \\ 
 \frac{-q^{2 n+1}-q^{4 n+2}-q^{6 n+4}+q^{6 n+5}+q^{8 n+5}+q^{8 n+6}+q^{10 n+7}-1}{\left(q^{2 n+1}-1\right) \left(q^{2 n+1}+1\right) \left(q^{2
 n+2}+1\right)}. 
\end{multline*}
\end{example}
 
\begin{example}
 Setting $(a, b, c, d) = \big( \frac{1}{6}, \frac{1}{6}, \frac{2}{3}, 1 \big)$ in Theorem \ref{theoremnegquart}, for the $q$-polynomial

 \ 

\noindent $ p(n) = \big(1-q^{2 n+1}\big) 
 \big(1 + q^{2 n+1}+q^{4 n+2}\big) 
 \big(1 + q^{6 n+1}+q^{6 n+3}-q^{6 n+5}+q^{12 n+2}+q^{12 n+4}-q^{12 n+8}+q^{18
 n+5}-q^{18 n+9}+q^{18 n+10}-q^{18 n+11}-q^{24 n+10}-q^{24 n+12}-q^{30 n+13}\big), $

 \ 

\noindent we obtain the $q$-analogue 
\begin{multline*}
 q ( 1 + q) \left(1-q^6\right) \left(1-q^8\right) 
 \sum_{n=0}^{\infty} q^{6n} 
 \left[ \begin{matrix} 
 q, q \vspace{1mm} \\ 
 q^4, q^6 \end{matrix} \, \Bigg| \, q^6 \right]_{n} 
 = \\ \sum_{n=0}^{\infty} (-1)^{n} q^{-3n^2-3n} 
 \left[ \begin{matrix} 
 q^5, q^{11} \vspace{1mm} \\ 
 q^{12}, q^{14}, q^{18}, q^{20} \end{matrix} \, \Bigg| \, q^{12} \right]_{n} 
 \left[ \begin{matrix} 
 q^{3}, q^{5}, q^{7} \vspace{1mm} \\ 
 q^{4} \end{matrix} \, \Bigg| \, q^6 \right]_{n} \, p(n)
\end{multline*}
 of the Ramanujan-type formula 
\begin{equation*}
 \frac{64\sqrt{3}}{3} = 
 \sum_{n = 0}^{\infty} 
 \left( -\frac{1}{4} \right)^{n} \left[ \begin{matrix} 
 \frac{5}{12}, \frac{5}{6}, \frac{11}{12} 
 \vspace{1mm} \\ 
 \frac{2}{3}, 1, \frac{5}{3} 
 \end{matrix} \right]_{n} (60 n + 43) 
 \end{equation*}
 introduced and proved by Chu in 2021 \cite{Chu2021Omega}
 via well poised $\Omega$-sums. 
\end{example}

\begin{example}
 Setting $(a, b, c, d) = \big( \frac{1}{3}, \frac{1}{6}, 1, \frac{5}{6} \big)$ in Theorem \ref{theoremnegquart}, we obtain 
 a $q$-analogue of the Ramanujan-type formula 
\begin{equation*}
 \frac{20 \sqrt[3]{2} }{3} = 
 \sum_{n = 0}^{\infty} 
 \left( -\frac{1}{4} \right)^{n} \left[ \begin{matrix} 
 \frac{1}{4}, \frac{2}{3}, \frac{3}{4} 
 \vspace{1mm} \\ 
 \frac{11}{12}, 1, \frac{17}{12}
 \end{matrix} \right]_{n} (20 n + 9) 
 \end{equation*}
 given previously via a different method
 in our past work \cite{Campbellunpublished}. 
\end{example}

\begin{example}
 Setting $(a, b, c, d) = \big( \frac{1}{2}, 1, \frac{3}{2}, 2 \big)$ in Theorem \ref{theoremnegquart}, we obtain a $q$-analogue of the 
 Ramanujan-type formula 
\begin{equation}\label{20251q129411ee1e0e3eqAqMq1qA}
 16 \log 2 = 
 \sum_{n = 0}^{\infty} 
 \left( -\frac{1}{4} \right)^{n} \left[ \begin{matrix} 
 \frac{3}{4}, 1, \frac{5}{4} 
 \vspace{1mm} \\ 
 \frac{3}{2}, \frac{3}{2}, \frac{3}{2}
 \end{matrix} \right]_{n} (20 n + 13) 
 \end{equation}
 proved via a different method in our past work \cite{Campbellunpublished}. 
\end{example}

 \begin{example}
 Setting $(a, b, c, d) = \big( \frac{3}{4}, \frac{1}{4}, \frac{3}{2}, 1 \big)$ in Theorem \ref{theoremnegquart}, we obtain 
 a $q$-analogue of the Ramanujan-type formula 
\begin{equation*}
 16 \sqrt{2} = 
 \sum_{n = 0}^{\infty} 
 \left( -\frac{1}{4} \right)^{n} \left[ \begin{matrix} 
 \frac{1}{8}, \frac{5}{8}, \frac{3}{4} 
 \vspace{1mm} \\ 
 1, \frac{3}{2}, \frac{3}{2} 
 \end{matrix} \right]_{n} (40 n + 23) 
 \end{equation*}
 proved via a different method in our past work \cite{Campbellunpublished}. 
\end{example}

 \begin{example}
 Setting $(a, b, c, d) = \big( \frac{3}{4}, 1, \frac{3}{2}, \frac{3}{2} \big)$ in Theorem \ref{theoremnegquart}, we obtain 
 a $q$-analogue of the Ramanujan-type formula 
\begin{equation*}
 \frac{5 \Gamma^{4}\left( \frac{1}{4} \right) }{16 \pi} = 
 \sum_{n = 0}^{\infty} 
 \left( -\frac{1}{4} \right)^{n} \left[ \begin{matrix} 
 \frac{3}{8}, \frac{7}{8}, 1 
 \vspace{1mm} \\ 
 \frac{9}{8}, \frac{5}{4}, \frac{13}{8} 
 \end{matrix} \right]_{n} (40 n + 19). 
 \end{equation*}
\end{example}

\begin{example}
 Setting $(a, b, c, d) = \big( \frac{1}{4}, \frac{1}{4}, 1, 1 \big)$ in Theorem \ref{theoremnegquart}, we obtain 
 a $q$-analogue of the Ramanujan-type formula 
\begin{equation*}
 \frac{8 \Gamma^{2}\left( \frac{1}{4} \right) }{\pi^{3/2}} = 
 \sum_{n = 0}^{\infty} 
 \left( -\frac{1}{4} \right)^{n} \left[ \begin{matrix} 
 \frac{3}{8}, \frac{3}{4}, \frac{7}{8} 
 \vspace{1mm} \\ 
 1, 1, \frac{3}{2}
 \end{matrix} \right]_{n} (40 n + 21). 
 \end{equation*}
\end{example}

 \begin{example}
 Setting $(a, b, c, d) = \big( \frac{1}{2}, \frac{5}{6}, 1, 2 \big)$ in Theorem \ref{theoremnegquart}, we obtain 
 a $q$-analogue of the Ramanujan-type formula 
\begin{equation*}
 \frac{2^{\frac{25}{3}} \pi}{3 \Gamma^{3}\left( \frac{1}{3} \right) } = 
 \sum_{n = 0}^{\infty} 
 \left( -\frac{1}{4} \right)^{n} \left[ \begin{matrix} 
 \frac{3}{4}, \frac{7}{6}, \frac{5}{4} 
 \vspace{1mm} \\ 
 1, \frac{4}{3}, 2 
 \end{matrix} \right]_{n} (20 n + 21). 
 \end{equation*}
\end{example}

\begin{example}
 Setting $(a, b, c, d) = \big( \frac{3}{4}, \frac{1}{2}, 1, \frac{3}{2} \big)$ in Theorem \ref{theoremnegquart}, we obtain 
 a $q$-analogue of the Ramanujan-type formula 
\begin{equation*}
 \frac{15 \Gamma^{2}\left( \frac{1}{4} \right) }{28 \sqrt{2\pi}} = 
 \sum_{n = 0}^{\infty} 
 \left( -\frac{1}{4} \right)^{n} \left[ \begin{matrix} 
 \frac{3}{8}, \frac{1}{2}, \frac{15}{8} 
 \vspace{1mm} \\ 
 \frac{9}{8}, \frac{13}{8}, \frac{7}{4} 
 \end{matrix} \right]_{n} (5 n + 3). 
 \end{equation*}
\end{example}

 \begin{example}
 Setting $(a, b, c, d) = \big( \frac{3}{2}, \frac{1}{6}, 2, 1 \big)$ in Theorem \ref{theoremnegquart}, we obtain 
 a $q$-analogue of the Ramanujan-type formula 
\begin{equation*}
 \frac{ 2^{\frac{14}{3}} \sqrt{3} \Gamma^{3}\left( \frac{1}{3} \right) }{5 \pi^2} = 
 \sum_{n = 0}^{\infty} 
 \left( -\frac{1}{4} \right)^{n} \left[ \begin{matrix} 
 -\frac{1}{4}, \frac{1}{4}, \frac{5}{6} 
 \vspace{1mm} \\ 
 1, \frac{5}{3}, 2 
 \end{matrix} \right]_{n} (20 n + 17). 
 \end{equation*}
\end{example}

\begin{example}
 Setting $(a, b, c, d) = \big( -\frac{1}{2}, \frac{1}{4}, 1, \frac{1}{2} \big)$ in Theorem \ref{theoremnegquart}, we obtain 
 a $q$-analogue of the Ramanujan-type formula 
\begin{equation*}
 \frac{21 \pi^{3/2} }{2 \sqrt{2} \Gamma^{2}\left( \frac{1}{4} \right)} = 
 \sum_{n = 0}^{\infty} 
 \left( -\frac{1}{4} \right)^{n} \left[ \begin{matrix} 
 \frac{1}{2}, \frac{5}{4}, \frac{3}{2} 
 \vspace{1mm} \\ 
 \frac{3}{4}, \frac{11}{8}, \frac{15}{8} 
 \end{matrix} \right]_{n} (5 n + 4). 
 \end{equation*}
\end{example}

\begin{example}
 Setting $(a, b, c, d) = \big( \frac{1}{2}, \frac{1}{6}, 1, 1 \big)$ in Theorem \ref{theoremnegquart}, we obtain 
 a $q$-analogue of the Ramanujan-type formula 
\begin{equation*}
 \frac{2^{11/3} \Gamma^{3}\left( \frac{1}{3} \right)}{ 5 \sqrt{3} \pi^2 } = 
 \sum_{n = 0}^{\infty} 
 \left( -\frac{1}{4} \right)^{n} \left[ \begin{matrix} 
 \frac{1}{4}, \frac{3}{4}, \frac{5}{6} 
 \vspace{1mm} \\ 
 1, 1, \frac{5}{3}
 \end{matrix} \right]_{n} (4 n + 3). 
 \end{equation*}
\end{example}

\subsection{Further $q$-analogues for the convergence rate $\frac{1}{4}$}\label{suqqqqbqqquqqaqrqtqer}
 For brevity, we henceforward omit full proofs of multiparameter ${}_{3}\phi_{2}$-series obtained
 by {\tt EKHAD}-normalization, 
 leaving it to the reader to adapt 
 the above proof of 
 Theorem \ref{theoremquarter}. 
 By applying {\tt EKHAD}-normalization
 starting with 
\begin{equation}\label{Fsecondquarter}
 F(n, k) := q^{k} \left[ \begin{matrix} 
 q^{n + a}, q^{n+b} \vspace{1mm} \\ 
 q^{n+c}, q^{2n+d} \end{matrix} \, \Bigg| \, q \right]_{k}, 
\end{equation}
 and by mimicking the proof of 
 Theorem \ref{theoremquarter}, 
 the resulting tranformation of a ${}_{3}\phi_{2}$-series of the form 
 $$ {}_{3}\phi_{2}\!\!\left[ \begin{matrix} 
 q^{a}, q^{b}, q \vspace{1mm}\\ 
 q^{c}, q^{d} \end{matrix} \ \Bigg| \ q; q 
 \right], $$
 by analogy with Theorems \ref{theoremquarter} 
 and \ref{theoremnegquart} and resulting from a relation of the form 
 $$ \sum_{k=0}^{\infty} \overline{F}(0, k;q) = \sum_{n=0}^{\infty} \overline{G}(n,0;q) $$
 for the $q$-WZ pair $(\overline{F}, \overline{G})$
 obtained from the application of {\tt EKHAD}-normalization to 
 \eqref{Fsecondquarter}, this gives a powerful way of obtaining 
 $q$-analogues of accelerated series of convergence rate $\frac{1}{4}$, 
 as demonstrated below. 

\begin{example}
 Setting $(a, b, c, d) = \big( 1, 1, \frac{3}{2}, 2 \big)$ after applying {\tt EKHAD}-normali-zation to \eqref{Fsecondquarter}, 
 we obtain the second new $q$-analogue of the Fabry--Guillera formula highlighted 
 as a motivating result in Section \ref{subFabryGuillera}. 
\end{example} 

\begin{example}
 Setting $(a, b, c, d) = \big( \frac{1}{4}, \frac{1}{4}, 1, \frac{3}{4} \big)$ 
 after applying {\tt EKHAD}-normali-zation to \eqref{Fsecondquarter}, 
 we obtain a $q$-analogue of the Ramanujan-type formula 
 $$ \frac{3\sqrt{2}}{4} = \sum_{n = 0}^{\infty} 
 \left( \frac{1}{4} \right)^{n} \left[ \begin{matrix} 
 \frac{1}{4}, \frac{1}{2}, \frac{1}{2} \vspace{1mm} \\ 
 \frac{7}{8}, 1, \frac{11}{8} \end{matrix} \right]_{n} (3n+1). $$
\end{example} 

\begin{example}
 Setting $(a, b, c, d) = \big( -\frac{1}{2}, \frac{1}{3}, 1, \frac{1}{6} \big)$ after applying {\tt EKHAD}-normali-zation to \eqref{Fsecondquarter}, 
 we obtain a $q$-analogue of the Ramanujan-type formula 
 $$ \frac{8 \sqrt{3}}{3} = \sum_{n = 0}^{\infty} 
 \left( \frac{1}{4} \right)^{n} \left[ \begin{matrix} 
 -\frac{1}{2}, \frac{1}{6}, \frac{5}{6} \vspace{1mm} \\ 
 \frac{2}{3}, 1, \frac{5}{3} \end{matrix} \right]_{n} (18 n + 5). $$
\end{example} 

\begin{example}
 Setting $(a, b, c, d) = \big( \frac{1}{6}, \frac{1}{2}, 1, \frac{5}{6} \big)$ after applying {\tt EKHAD}-normali-zation to \eqref{Fsecondquarter}, 
 we obtain a $q$-analogue of the Ramanujan-type formula 
 $$ \frac{5 \sqrt{3}}{2} = \sum_{n = 0}^{\infty} 
 \left( \frac{1}{4} \right)^{n} \left[ \begin{matrix} 
 \frac{1}{3}, \frac{1}{2}, \frac{2}{3} \vspace{1mm} \\ 
 \frac{11}{12}, 1, \frac{17}{12} \end{matrix} \right]_{n} (9 n + 4). $$
\end{example} 

\begin{example}
 Setting $(a, b, c, d) = \big( \frac{1}{3}, \frac{5}{6}, 1, \frac{3}{2} \big)$ after applying {\tt EKHAD}-normali-zation to \eqref{Fsecondquarter}, 
 we obtain a $q$-analogue of the Ramanujan-type formula 
 $$ \frac{27 \sqrt[3]{2} }{4} = \sum_{n = 0}^{\infty} 
 \left( \frac{1}{4} \right)^{n} \left[ \begin{matrix} 
 \frac{2}{3}, \frac{5}{6}, \frac{7}{6} \vspace{1mm} \\ 
 1, \frac{5}{4}, \frac{7}{4} \end{matrix} \right]_{n} (9 n + 7). $$
\end{example} 

\begin{example}
 Setting $(a, b, c, d) = \big( \frac{1}{3}, \frac{5}{6}, \frac{3}{2}, 1 \big)$ 
 after applying {\tt EKHAD}-normali-zation to \eqref{Fsecondquarter}, 
 we obtain a $q$-analogue of the Ramanujan-type formula 
 $$ 9 \sqrt[3]{2} = \sum_{n = 0}^{\infty} 
 \left( \frac{1}{4} \right)^{n} \left[ \begin{matrix} 
 \frac{1}{6}, \frac{2}{3}, \frac{5}{6} \vspace{1mm} \\ 
 1, \frac{3}{2}, \frac{3}{2} \end{matrix} \right]_{n} (18 n + 11). $$
\end{example} 

\begin{example}
 Setting $(a, b, c, d) = \big( \frac{1}{6}, \frac{1}{3}, 1, \frac{5}{6} \big)$ after applying {\tt EKHAD}-normali-zation to \eqref{Fsecondquarter}, 
 we obtain a $q$-analogue of the Ramanujan-type formula 
 $$ \frac{5 \sqrt[3]{2} }{6} = \sum_{n = 0}^{\infty} 
 \left( \frac{1}{4} \right)^{n} \left[ \begin{matrix} 
 \frac{1}{6}, \frac{1}{2}, \frac{2}{3} \vspace{1mm} \\ 
 \frac{11}{12}, 1, \frac{17}{12} \end{matrix} \right]_{n} (3 n + 1). $$
\end{example} 

\begin{example}
 Setting $(a, b, c, d) = \big( \frac{1}{2}, -\frac{1}{2}, \frac{3}{4}, 1 \big)$ 
 after applying {\tt EKHAD}-normali-zation to \eqref{Fsecondquarter}, 
 we obtain a $q$-analogue of the Ramanujan-type formula 
 $$ \frac{12\pi^2}{\Gamma^{4}\left( \frac{1}{4} \right)} = \sum_{n = 0}^{\infty} 
 \left( \frac{1}{4} \right)^{n} \left[ \begin{matrix} 
 -\frac{1}{2}, \frac{1}{2}, \frac{3}{2} \vspace{1mm} \\ 
 \frac{3}{4}, 1, \frac{7}{4} \end{matrix} \right]_{n} (3n+1). $$
\end{example} 

\begin{example}
 Setting $(a, b, c, d) = \big( \frac{1}{2}, -\frac{1}{2}, \frac{5}{6}, 1 \big)$ 
 after applying {\tt EKHAD}-normali-zation to \eqref{Fsecondquarter}, 
 we obtain a $q$-analogue of the Ramanujan-type formula 
 $$ \frac{ 5 \, 2^{\frac{11}{3}} \pi^3 }{ \Gamma^{6}\left( \frac{1}{3} 
 \right) } = \sum_{n = 0}^{\infty} 
 \left( \frac{1}{4} \right)^{n} \left[ \begin{matrix} 
 -\frac{1}{2}, \frac{1}{2}, \frac{3}{2} \vspace{1mm} \\ 
 \frac{5}{6}, 1, \frac{11}{6} \end{matrix} \right]_{n} (18n+7). $$
\end{example} 

\begin{example}
 Setting $(a, b, c, d) = \big( \frac{1}{2}, \frac{5}{6}, 2, 1 \big)$ 
 after applying {\tt EKHAD}-normali-zation to \eqref{Fsecondquarter}, 
 we obtain a $q$-analogue of the Ramanujan-type formula 
 $$ \frac{ 2^{\frac{19}{3}} \pi }{ \Gamma^{3}\left( \frac{1}{3} 
 \right) } = \sum_{n = 0}^{\infty} 
 \left( \frac{1}{4} \right)^{n} \left[ \begin{matrix} 
 \frac{1}{6}, \frac{1}{2}, \frac{5}{6} \vspace{1mm} \\ 
 1, \frac{5}{3}, 2 \end{matrix} \right]_{n} (18n+13). $$
\end{example} 

\begin{example}
 Setting $(a, b, c, d) = \big( -\frac{1}{2}, \frac{1}{2}, \frac{1}{3}, 1 \big)$ 
 after applying {\tt EKHAD}-normali-zation to \eqref{Fsecondquarter}, 
 we obtain a $q$-analogue of the Ramanujan-type formula 
 $$ -\frac{\Gamma^{6}\left( \frac{1}{3} \right)}{2^{\frac{5}{3}} \pi^3} = \sum_{n = 0}^{\infty} 
 \left( \frac{1}{4} \right)^{n} \left[ \begin{matrix} 
 -\frac{1}{2}, \frac{1}{2}, \frac{3}{2} \vspace{1mm} \\ 
 \frac{1}{3}, 1, \frac{4}{3} \end{matrix} \right]_{n} (18n + 1). $$
\end{example} 

\begin{example}
 Setting $(a, b, c, d) = \big( -\frac{1}{2}, \frac{1}{3}, 1, \frac{1}{6} \big)$ 
 after applying {\tt EKHAD}-normali-zation to \eqref{Fsecondquarter}, 
 we obtain a $q$-analogue of the Ramanujan-type formula 
 $$ -\frac{ \sqrt{3} \Gamma^{6}\left( \frac{1}{3} \right) }{2^{\frac{14}{3}} \pi^3} = 
 \sum_{n = 0}^{\infty} 
 \left( \frac{1}{4} \right)^{n} \left[ \begin{matrix} 
 -\frac{1}{2}, -\frac{1}{6}, \frac{2}{3} \vspace{1mm} \\ 
 \frac{7}{12}, 1, \frac{13}{12} \end{matrix} \right]_{n} (9n - 1). $$
\end{example} 

\subsection{$q$-analogues for the convergence rate $\frac{1}{64}$}
 Applying {\tt EKHAD}-normalization to 
\begin{equation}\label{Ffor64}
 F(n, k) := q^{k} \left[ \begin{matrix} 
 q^{n+a}, q^{n+b} \vspace{1mm} \\ 
 q^{2n+c}, q^{2n+d} \end{matrix} \, \Bigg| \, q \right]_{k}, 
\end{equation}
 by analogy with the preceding subsection, 
 this gives us 
 a way of obtaining $q$-analogues of 
 accelerated series of convergence rate $\frac{1}{64}$, as below. 

\begin{example}
 Setting $(a,b,c,d) = \big( 1, 1, 2, 2 \big)$ after applying {\tt EKHAD}-normali-zation to \eqref{Ffor64}, 
 this provides the $q$-analogue 
 highlighted in Section \ref{sectionZeilberger}
 of the Zeilberger formula in \eqref{displayZeilberger}. 
\end{example}

\begin{example}
 Setting $(a,b,c,d) = \big( \frac{1}{2}, \frac{1}{2}, 1, \frac{3}{2} \big)$
 after applying {\tt EKHAD}-normali-zation to 
 \eqref{Ffor64}, 
 and setting 

 \ 

\noindent $ r(n) = \big( 3 q^{2 n+2}+q^{4 n+1}+q^{4 n+3}+q^{6 n+2}+q^{6 n+4}-2 q^{8 n+4}-2 q^{8 n+6}-2 q^{10 n+5}+q^{12 n+7}+q^{14 n+8}-q-2
 \big)/\big( q^{2 n+1}+1 \big), $ 

 \ 

\noindent we obtain the $q$-analogue 
\begin{multline*}
 -(1-q)^2 (1-q^3)^{2} \sum_{k=0}^{\infty} \frac{q^{2k}}{1-q^{2k+1}} 
 \left[ \begin{matrix} 
 q \vspace{1mm} \\ 
 q^2 \end{matrix} \, \Bigg| \, q^2 \right]_{k} = \\ 
 \sum_{n=0}^{\infty} q^{2n} \frac{ \left( q;q^{2} \right)_{n}^{4} 
 \left( q^{2};q^{2} \right)_{n}^{2} }{ \left( q^{2};q^{4} \right)_{n} 
 \left( q^{4};q^{4} \right)_{n} 
 \left( q^{5};q^{4} \right)_{n}^{2} \left( q^{7};q^{4} \right)_{n}^{2} } r(n)
\end{multline*} 
 of the Chu formula 
\begin{equation*}
 \frac{9\pi}{4} = 
 \sum_{n = 0}^{\infty} 
 \left( \frac{1}{64} \right)^{n} \left[ \begin{matrix} 
 1, \frac{1}{2}, \frac{1}{2}, \frac{1}{2} 
 \vspace{1mm} \\ 
 \frac{5}{4}, \frac{5}{4}, \frac{7}{4}, \frac{7}{4}
 \end{matrix} \right]_{n} \left( 42n^3 + 75n^2+42n + 7 \right) 
 \end{equation*} 
 introduced via well-poised $\Omega$-sums \cite{Chu2021Omega}. 
\end{example}

\begin{example}
 Setting $(a,b,c,d) = \big( \frac{1}{2}, \frac{1}{6}, 1, 1 \big)$
 after applying {\tt EKHAD}-normali-zation to 
 \eqref{Ffor64}, 
 we obtain a $q$-analogue of the Ramanujan-type formula 
\begin{equation*}
 \frac{ 2^{\frac{14}{3}} \sqrt{3} \Gamma^{3}\left( \frac{1}{3} \right) }{\pi^2} = 
 \sum_{n = 0}^{\infty} 
 \left( \frac{1}{64} \right)^{n} \left[ \begin{matrix} 
 \frac{1}{2}, \frac{5}{6}, \frac{5}{6} 
 \vspace{1mm} \\ 
 1, 1, \frac{5}{3} 
 \end{matrix} \right]_{n} \left( 126 n + 85 \right). 
 \end{equation*} 
\end{example}

\subsection{$q$-analogues for the convergence rate $-\frac{1}{27}$}
 By applying {\tt EKHAD}-normali-zation to 
 \begin{equation}\label{inputfor27}
 q^{k} \left[ \begin{matrix} 
 q^{a}, q^{n+b} \vspace{1mm} \\ 
 q^{2n+c}, q^{2n+d} \end{matrix} \, \Bigg| \, q \right]_{k}, 
\end{equation}
 by analogy with the preceding section, this gives us a way of generating 
 $q$-analogues of accelerated series of convergence rate $-\frac{1}{27}$, as below. 

\begin{example}
 Setting $(a, b, c, d) = \big( \frac{1}{2}, \frac{1}{2}, \frac{3}{2}, 1 \big)$ 
 after applying {\tt EKHAD}-normali-zation to \eqref{inputfor27}, 
 and by writing

 \ 

\noindent $p(n) = 1-q^{2 n+2}+q^{4 n+1}+q^{4 n+2}+q^{4 n+3}+q^{6 n+2}-q^{6 n+5}+q^{8 n+3}-q^{10 n+6}-q^{10 n+7}-q^{12 n+7}-q^{14 n+8}$, 

 \ 

\noindent we obtain the new $q$-analogue
\begin{multline*} 
 q(1-q^5) \sum_{k=0}^{\infty} 
 q^{2k} \left[ \begin{matrix} 
 q, q \vspace{1mm} \\ 
 q^2, q^3 \end{matrix} \, \Bigg| \, q^2 \right]_{k} = \\ 
 \sum_{n=0}^{\infty} (-1)^n q^{-n-n^2} 
 \frac{ (q;q^2)_{n}^{2} (q^2;q^2)_{n} }{ (q^3;q^6)_{n} (q^7;q^6)_{n} (q^{11};q^6)_{n} } 
 \frac{p(n)}{q^{2 n+1}+q^{4
 n+2}+1} 
\end{multline*}
 of the formula 
 $$ \frac{15 \pi}{8} = \sum_{k=0}^{\infty} 
 \left( -\frac{1}{27} \right)^{k} \left[ \begin{matrix} 
 1, \frac{1}{2} \vspace{1mm} \\ 
 \frac{7}{6}, \frac{11}{6} \end{matrix} \right]_{k} (7k+6) $$
 due to Chu and Zhang \cite[Example 27]{ChuZhang2014}. 
\end{example}

\begin{example}
 Setting $(a, b, c, d) = \big( 1, 1, 2, 2 \big)$ after applying {\tt EKHAD}-normali-zation to \eqref{inputfor27}, 
 and writing 
 
 \ 

\noindent $ r(n) = (-q^{2 n+1}-q^{2 n+2}-3 q^{3 n+2}-q^{4 n+3}+q^{5 n+3}+2 q^{6 n+4}+q^{6 n+5}+2 q^{7 n+5}+q^{8 n+6} - 
 1) / (\left(q^{n+1}+1\right)
 \left(q^{n+1}+q^{2 n+2}+1\right)), $ 

 \ 

\noindent we obtain the $q$-analogue
\begin{multline*}
 -q(1-q)^{3} (1+q) \sum_{k=0}^{\infty} \frac{q^{k}}{(1-q^{k+1})^{2}} = \\ 
 \sum_{n=0}^{\infty} (-1)^{n} q^{-\frac{n(n+1)}{2}}
 \frac{ \left( q;q \right)_{n}^{3} \left( q;q^2 \right)_{n} }{ 
 \left( q^{3};q^{2} \right)_{n} \left( q^{3};q^{3} \right)_{n} \left( q^{4};q^{3} \right)_{n} 
 \left( q^{5};q^{3} \right)_{n} } r(n)
\end{multline*}
 of the formula 
 $$ \frac{\pi^2}{2} = \sum_{k=0}^{\infty} 
 \left( -\frac{1}{27} \right)^{k} \left[ \begin{matrix} 
 1, 1 \vspace{1mm} \\ 
 \frac{4}{3}, \frac{5}{3} \end{matrix} \right]_{k} \frac{7k+5}{2k+1} $$
 due to Chu and Zhang \cite[Example 25]{ChuZhang2014}. 
\end{example}

\begin{example}
 Setting $(a, b, c, d) = \big( 1, \frac{1}{2}, \frac{3}{2}, \frac{3}{2} \big)$ 
 after applying {\tt EKHAD}-normali-zation to \eqref{inputfor27}, 
 and writing 
 
 \ 

\noindent $ p(n) = q^{2 n+1}-2 q^{2 n+2}+q^{4 n+1}+q^{4 n+2}-q^{4 n+3}+q^{4 n+4}+q^{6 n+2}+2 q^{6 n+3}-q^{6 n+4}-q^{6 n+5}+q^{8 n+3}-q^{8 n+4}-2 q^{8 n+5}-q^{8
 n+6}+q^{8 n+7}-q^{10 n+5}-q^{10 n+6}-q^{10 n+7}+q^{10 n+8}-q^{12 n+6}-q^{12 n+7}-q^{12 n+8}+q^{12 n+9}-q^{14 n+8}+q^{14 n+9}+q^{14
 n+10}+q^{16 n+10}+q^{18 n+11}+1, $ 

 \ 

\noindent we obtain the $q$-analogue
\begin{multline*}
 q(1-q^5) \sum_{k=0}^{\infty} 
 \frac{q^{2k}}{1-q^{2k+1}} \left[ \begin{matrix} 
 q^{2} \vspace{1mm} \\ 
 q^{3} \end{matrix} \, \Bigg| \, q^2 \right]_{k} = 
 \sum_{n=0}^{\infty} (-1)^{n} q^{-n^2} 
 \times \\ \frac{ \left( q;q^2 \right)_{n} \left( q^2;q^2 \right)_{n}^{2} }{ 
 \left( q^{3};q^{6} \right)_{n} 
 \left( q^{7}; q^{6} \right)_{n} \left( q^{11};q^{6} \right)_{n} } 
 \frac{p(n)}{ \left( 1-q^{4n+1} \right) \left( 1 - q^{4n+3} \right) 
 \left( 1 + q^{2n+1} + q^{4n+2} \right) }
\end{multline*}
 of the formula 
 $$ 30 G = \sum_{k=0}^{\infty} 
 \left( -\frac{1}{27} \right)^{k} \left[ \begin{matrix} 
 1, 1 \vspace{1mm} \\ 
 \frac{7}{6}, \frac{11}{6} \end{matrix} \right]_{k} \frac{112k^2+192k+83}{(4k+1)(4k+3)} $$
 due to Chu and Zhang \cite[Example 29]{ChuZhang2014}. 
\end{example}

\section{Conclusion}
 Our multiparameter {\tt EKHAD}-normalization method may be applied much more broadly, using $q$-analogues of hypergeometric 
 input functions as in our past work \cite{Campbell2025Hypergeometric}. For brevity, and in view of the many new $q$-analogues of 
 previously published results from many authors highlighted 
 in Section \ref{sectionMotivating}, we leave this, along with the research areas below, to separate projects. 

 We encourage the exploration of alternatives to and variants of our $q$-WZ-based techniques given above. For example, the
 $q$-WZ-based technique employed in our past work \cite{Campbell2025further} to produce the final $q$-analogue of the
 Fabry--Guillera formula listed in Section \ref{subFabryGuillera} is not equivalent to the $q$-WZ-based techniques given in our current 
 paper, noting that this previous $q$-WZ-based technique produces series with $q$-Pochhammer symbols indexed by $\infty$
 within the summands. For example, by adapting the approach from our past research on a $q$-version of the Fabry--Guillera formula 
 \cite{Campbell2025further} and by using a recent terminating $q$-binomial identity due to 
 Berkovich in 2024 \cite{Berkovich2024}, it can be shown that 
\begin{multline*}
 \sum_{n=0}^{\infty} \frac{q^{n(n+1)}}{3^n} 
 \left( \frac{1}{ \left( -3q;q^2 \right)_{\infty} \left( -\frac{q}{3}; q^2 \right)_{\infty} } 
 - \frac{ \left( q^{-2n+1};q^2 \right)_{\infty} \left( q^{2n+3};q^2 \right)_{\infty} }{ \left( q^2;q^2 \right)_{\infty} } 
 \right) = \\ 
 - 6 q \sum_{n=0}^{\infty} \frac{q^{2n}}{(q^{2n+1} + 3) (3 q^{2n+1} +1) } \frac{ 
 \left( q^{2n+1};q^2 \right)_{\infty} \left( q^{2n+3};q^2 \right)_{\infty} }{ \left( q^{4n+2};q^2 \right)_{\infty} 
 \left( -3q;q^2 \right)_{n} \left( -\frac{q}{3};q^2 \right)_{n}}, 
\end{multline*}
 and this gives us a $q$-analogue of the $\pi$-formula
 $\frac{2 \pi}{\sqrt{3}} = \sum_{n=1}^{\infty} \frac{3^n}{n \binom{2n}{n}}$. 
 
 How could {\tt EKHAD}-normalization be applied in the context of combinatorial aspects of integer partitions and associated $q$-series? 
 Give a $q$-series identity associated with a combinatorial proof 
 of an identity on integer partitions, how could an ``accelerated'' 
 version of this $q$-series identity be obtained through {\tt EKHAD}-normalization
 or an analogue of this technique? 

\subsection*{Acknowledgements}
 The author is very thankful to Yan-Ping Mu for much help formulating the proof of Theorem \ref{theoremquarter}, 
 and thanks Kam Cheong Au and Michael Schlosser for useful feedback.

 \ 

{\textsc{John M. Campbell}} 

\vspace{0.1in}

Department of Mathematics and Statistics

Dalhousie University 

6283 Alumni Crescent, Halifax, NS B3H 4R2

\vspace{0.1in}

{\tt jh241966@dal.ca}

\end{document}